\newcommand{\nc}{\newcommand}
\nc{\rnc}{\renewcommand}
\rnc{\P}{\mathbf P}
\nc{\R}{\mathbf R}
\rnc{\rm}{\mathrm}
\nc{\C}{\mathbf C}
\nc{\Q}{\mathbf Q}
\nc{\Z}{\mathbf Z}
\nc{\N}{\mathbf N}
\nc{\A}{\mathbf A}
\nc{\an}{\operatorname{an}}
\nc{\htt}{\operatorname{ht}}
\nc{\Nm}{\operatorname{Nm}}
\nc{\Ker}{\operatorname{Ker}}
\nc{\mmod}{\operatorname{mod}}
\nc{\End}{\operatorname{End}}
\nc{\Aut}{\operatorname{Aut}}
\nc{\cont}{\text{cont}}
\nc{\sep}{\text{sep}}
\nc{\Hom}{\mathrm{Hom}}
\nc{\Gal}{\mathrm{Gal}}
\nc{\Spec}{\operatorname{Spec}}
\nc{\Spv}{\operatorname{Spv}}
\nc{\supp}{\text{supp}}
\nc{\rad}{\operatorname{rad}}
\nc{\cal}{\mathcal}
\nc{\RZ}{\operatorname{RZ}}
\rnc{\t}{\tau}
\nc{\mm}{\pmb{\mu}}
\rnc{\a}{\alpha}
\nc{\n}{\mathfrak n}
\nc{\m}{\mathfrak m}
\nc{\mfs}{\mathfrak s}
\nc{\p}{\mathfrak p}
\nc{\q}{\mathfrak q}
\nc{\Sym}{\operatorname{Sym}}
\nc{\codim}{\operatorname{codim}}
\nc{\rk}{\operatorname{rk}}
\nc{\GL}{\operatorname{GL}}
\nc{\SL}{\operatorname{SL}}
\nc{\Lie}{\operatorname{Lie}}
\nc{\Ind}{\operatorname{Ind}}
\nc{\Div}{\underline{Div}}
\nc{\Pic}{\mathbf{Pic}}
\nc{\uPic}{\underline{ \mathbf{Pic}}}
\nc{\rH}{\mathrm{H}}
\nc{\Spf}{\operatorname{Spf}}
\nc{\Frac}{\operatorname{Frac}}
\nc{\colim}{\operatorname{colim}}
\nc{\Spa}{\operatorname{Spa}}
\nc{\Tor}{\operatorname{Tor}}
\rnc{\an}{\operatorname{an}}
\nc{\xr}{\xrightarrow}
\nc{\eps}{\epsilon}
\nc{\ov}{\overline}
\nc{\ud}{\underline}
\nc{\wdh}{\widehat}
\nc{\F}{\mathcal F}
\nc{\G}{\mathcal G}
\nc{\E}{\mathcal E}
\nc{\X}{\mathfrak X}
\nc{\sZ}{\mathfrak Z}
\nc{\Y}{\mathfrak Y}
\nc{\T}{\mathfrak T}
\nc{\sU}{\mathfrak U}
\nc{\V}{\mathfrak V}
\nc{\LL}{\mathcal{L}}
\rnc{\S}{\mathfrak S}
\nc{\ra}{\rangle}
\nc{\os}{\overset}
\rnc{\O}{\mathcal O}
\nc{\J}{\mathcal J}
\theoremstyle{definition}
\newtheorem{thm}{Theorem}[section]
\newtheorem{lemma}[thm]{Lemma}
\newtheorem{defn}[thm]{Definition}
\newtheorem{claim}{Claim}
\newtheorem{claim1}{Claim}
\newtheorem{Set-up}[thm]{Set-up}
\newtheorem{rmk}[thm]{Remark}
\newtheorem{cor}[thm]{Corollary}
\begin{document}
\bibliographystyle{halpha-abbrv}
\title{Sheafiness of Strongy Rigid-Noetherian Huber Pairs}
\author{Bogdan Zavyalov}
\maketitle

\begin{abstract}
We show that any strongly rigid-noetherian Huber ring $A$ is sheafy. In particular, we positively answer Problem~$31$ in the Nonarchimedean Scottish Book.
\end{abstract}
\section{Introduction}

In the paper \cite{H1}, Huber defined the notion of an adic spectrum $\Spa(A, A^+)$ for a Huber pair $(A, A^+)$. One of the main nuisances of this theory is that the structure presheaf $\O_{(A, A^+)}$ is not always a sheaf on $\Spa(A, A^+)$ (see \cite[Example after Proposition 1.6]{H1}). However, Huber showed that $\O_{(A, A^+)}$ is a sheaf in two important cases: if $A$ is a strongly noetherian Tate ring; and if $A$ has a noetherian ring of definition. The former case was later generalized in \cite{KedAr} to the strongly noetherian analytic case. Huber gave different arguments for the two cases. In the former case his argument is very close in the spirit to Raynaud's theory of admissible blow-ups and ``generic fibers''; in the latter case he was able to adapt the Tate's proof of sheafiness of $\O_A$ in the rigid geometry, this argument is based on some analytic considerations. \smallskip

The two mentioned above examples cover adic spaces that come from rigid spaces or noetherian formal schemes. However, one important disadvantage of these results is that they do not cover formal schemes that are (locally) topologically finitely presented over $\O_{\C_p}$ as the ring $\O_{\C_p}$ is not noetherian. In contrast, there is a good theory of formal schemes over $\O_{\C_p}$ developed, for example, in \cite{B}, and significantly generalized in \cite{FujKato}. \smallskip

David Hansen proposed a question in the Nonarchimedean Scottish Book if any complete, universally topologically rigid-noetherian ring $A$ (see Definition~\ref{defn:rigid-noetherian}) is sheafy. The main reason why Huber's proof in the case of a noetherian ring of definition does not work in this more general setup is that Huber needs to use certain finiteness results from \cite{EGA3} that require the noetherian hypotheses. Our main new idea is to use results from the recent book \cite{FujKato} in place of \cite{EGA3} to make Huber's argument work in a bigger generality. \smallskip

Based on this approach, we are able to show that any Huber ring $A$ with a topologically universally rigid-noetherian ring of definition is sheafy. This unifies the two cases done by Huber and the case done by Kedlaya. Moreover, it provides a new proof in the strongly noetherian analytic case that does not use any analytic considerations.

\begin{thm}\label{thm:main-intro}(Theorem~\ref{thm:main}) Let $(A, A^+)$ be a strongly rigid-noetherian Huber pair (see Definition~\ref{defn:rigid-noetherian}). Then the structure presheaf $\O_X$ is a sheaf of topological rings on $X=\Spa(A, A^+)$. Furthermore, $\rm{H}^i(U, \O_X)=0$ for any rational subdomain $U\subset X$ and  $i\geq 1$.
\end{thm}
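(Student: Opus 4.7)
The plan is to revisit Huber's proof of sheafiness in the case of a noetherian ring of definition, as given in \cite{H1}, identify precisely where the noetherian hypothesis is used, and replace those inputs with corresponding statements from \cite{FujKato} that hold in the (universally) rigid-noetherian setting. Huber's strategy, itself a generalization of Tate's proof in rigid geometry, is to compare the \v{C}ech complex of $\O_X$ attached to a rational cover of $X=\Spa(A,A^+)$ with the \v{C}ech cohomology of an admissible formal blow-up $\pi\colon \X' \to \X = \Spf A_0$ of a chosen ring of definition $A_0 \subset A$. Sheafiness and the cohomological vanishing on rational subdomains would then be deduced from coherence and vanishing statements for higher cohomology of coherent $\O_{\X'}$-modules on $\X'$, together with the passage between the formal and adic pictures.

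The implementation proceeds in three stages. First, one reduces by a Laurent-cover argument to checking acyclicity of the \v{C}ech complex of a standard rational cover of $X$ itself; here the hypothesis that $(A,A^+)$ is \emph{strongly} rigid-noetherian, rather than merely rigid-noetherian, is exactly what guarantees that the same reduction still applies to each rational subdomain $U \subset X$, since such subdomains introduce polynomial variables. Second, one lifts the cover to an admissible formal blow-up of $\Spf A_0$ and identifies the terms of the adic \v{C}ech complex with global sections on the standard affine cover of $\X'$, after inverting a chosen ideal of definition $I \subset A_0$. Third, one computes: the alternating sum of these global sections must form an exact sequence, which in turn reduces to showing that the higher coherent cohomology of $\O_{\X'}$ on $\X'$ either vanishes outright or is bounded $I$-power torsion that disappears on passing to the adic generic fibre.

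The crucial and most delicate step is the third one, which in Huber's setting followed from noetherian formal GAGA and the finiteness theorems of \cite{EGA3}. In the rigid-noetherian generality, the analogous finiteness must be extracted from the formal-geometric foundations of \cite{FujKato}: their theory of coherent (non-noetherian) formal schemes, admissible ideals and blow-ups, and, most importantly, their coherence theorems for higher direct images of coherent modules. The main obstacle I expect is twofold: first, to verify that the admissible blow-ups produced from rational covers of $\Spf A_0$ remain inside the class of formal schemes to which the \cite{FujKato} coherence results apply (i.e.\ that the rigid-noetherian property is preserved under the relevant constructions, so that the input is self-reproducing), and second, to package those coherence statements in a form precise enough to yield, after inverting $I$, the acyclicity of the adic \v{C}ech complex. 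Once this cohomological input is in place, both the sheafiness of $\O_X$ and the vanishing $\rH^i(U, \O_X) = 0$ for rational $U$ and $i \ge 1$ should follow from the same limit argument Huber uses in the noetherian case.
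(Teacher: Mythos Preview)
Your overall strategy matches the paper's: reduce to standard rational covers, lift to a blow-up over the ring of definition, and replace the EGA3 finiteness inputs with results from \cite{FujKato}. Two points deserve sharpening, and one of them is a genuine gap.

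First, a minor difference in implementation: the paper does not work with formal blow-ups of $\Spf A_0$ but with the ordinary scheme $P=\mathrm{Proj}\bigoplus_m J^m$ over $\Spec A_0$, where $J=f_0A_0+\cdots+f_nA_0$. The uncompleted \v{C}ech complex of the adic cover is identified with the \v{C}ech complex of $s_*\O_U$ on $P$ (for $U=\Spec A$, $s\colon U\to P$ affine), which is exact on the nose by quasi-coherent cohomology. Your formal-blow-up picture is equivalent, but the scheme-theoretic one keeps the bookkeeping cleaner.

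Second, and this is the gap: your third stage frames the key point as ``higher cohomology is bounded $I$-power torsion and disappears on passing to the adic generic fibre.'' But the \v{C}ech terms on the adic side are \emph{completions} of the algebraic terms, not localizations at $I$, so inverting $I$ is not the mechanism. What you must show is that the exact uncompleted complex has \emph{strict} differentials, so that exactness survives completion (via \cite[III.2.12, Lemma~2]{Bou}). Strictness is the heart of the argument and is where the non-noetherian difficulty lies: it reduces to showing that, for each $k$ and $i\ge 0$, the map $\rm{H}^{i+1}(P,I^m\O_P)\to\rm{H}^{i+1}(P,I^k\O_P)$ is zero for $m\gg 0$. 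This in turn needs two facts: (a) $\rm{H}^{i+1}(P,I^k\O_P)$ has bounded $I$-power torsion, and (b) the natural $I$-filtration on it defines the $I$-adic topology. Neither follows from ``coherence theorems for higher direct images'' in this generality, because $\rm{H}^{i+1}(P,I^k\O_P)$ need \emph{not} be finite over $A_0$. The paper's substitute is the weaker notion of \emph{FP-approximated} modules (a finite submodule with $I$-power-torsion quotient) over a universally pseudo-adhesive base; it proves that cohomology of FP-approximated sheaves on projective $A_0$-schemes is FP-approximated, and that the natural $I$-topology on such cohomology coincides with the $I$-adic one. You should expect to need exactly this refinement, not a coherence statement, and your proposal should isolate strictness (not vanishing after inverting $I$) as the crux.
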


\begin{cor}(Lemma~\ref{lemma:sheafy-formal} and Theorem~\ref{thm:main-intro}) Let $k^+$ be a complete microbial valuation ring, and $A$ a topologically finite type $k^+$-algebra. Then the structure presheaf $\O_X$ is a sheaf of topological rings on $X=\Spa(A, A)$. Furthermore, $\rm{H}^i(U, \O_X)=0$ for any rational subdomain $U\subset X$ and  $i\geq 1$.
\end{cor}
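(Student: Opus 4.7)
The corollary should follow formally from Theorem~\ref{thm:main-intro} once Lemma~\ref{lemma:sheafy-formal} supplies the verification that the Huber pair $(A, A)$ is strongly rigid-noetherian. The plan is thus to focus on that verification and then invoke the main theorem; no new sheaf-theoretic work is required at the level of the corollary itself.

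First, I would check that $(A, A)$ is a well-defined Huber pair. Since $k^+$ is a complete microbial valuation ring, it admits a topologically nilpotent non-zero-divisor $\varpi$, and the restricted power series ring $k^+\langle T_1, \ldots, T_n\rangle$ is a complete Huber ring for the $\varpi$-adic topology. Any topologically finite type quotient $A$ then inherits a natural complete Huber ring structure with $A$ itself serving as a ring of definition and $(\varpi)$ as an ideal of definition. Boundedness of $A$ is automatic, since the powers of $\varpi$ form a basis of open neighborhoods of zero which absorb $A$; hence $A \subseteq A^\circ$ and $(A, A)$ is a bona fide Huber pair.

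The heart of the argument is the strong rigid-noetherian condition, which is where Lemma~\ref{lemma:sheafy-formal} does the substantive work. The key input is that the class of topologically finite type $k^+$-algebras is stable under the completions $A\langle f_1/g, \ldots, f_n/g\rangle$ that compute rational localizations; combined with the rigid-noetherian theory of topologically finitely presented formal schemes over complete microbial valuation rings developed in \cite{FujKato}, this yields the topologically universal rigid-noetherian property, hence strong rigid-noetherianness in the sense of Definition~\ref{defn:rigid-noetherian}. With this in hand, both the sheaf property of $\O_X$ and the vanishing $\mathrm{H}^i(U, \O_X) = 0$ on rational subdomains follow directly from Theorem~\ref{thm:main-intro}.

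The principal difficulty is therefore not in the corollary itself but in establishing the rigid-noetherian property for topologically finite type $k^+$-algebras in the generality of microbial rather than noetherian valuation rings. This is precisely the point at which the Fujiwara--Kato machinery replaces the \cite{EGA3}-style finiteness inputs used in Huber's original proof for noetherian rings of definition, and once that replacement has been made in Lemma~\ref{lemma:sheafy-formal}, the corollary reduces to a direct application of the main theorem.
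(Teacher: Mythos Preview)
Your high-level structure matches the paper exactly: the corollary is an immediate consequence of Lemma~\ref{lemma:sheafy-formal} (certifying that $A$ is strongly rigid-noetherian) followed by Theorem~\ref{thm:main-intro}, with no further sheaf-theoretic work.

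Where your account goes astray is in the description of what Lemma~\ref{lemma:sheafy-formal} actually requires. Definition~\ref{defn:rigid-noetherian} does not involve rational localizations at all; it only asks that $\Spec A\langle T_1,\dots,T_n\rangle$ be noetherian outside $(\varpi)$ for every $n$. The paper's verification is entirely elementary: $A\langle T_1,\dots,T_n\rangle[1/\varpi]$ is a quotient of the classical Tate algebra $k\langle T_1,\dots,T_m\rangle$ over the nonarchimedean field $k=\Frac(k^+)$, and this is noetherian by \cite[Proposition 2.2/14]{B}. No stability under $A\langle f_1/g,\dots,f_n/g\rangle$ is needed, and no input from \cite{FujKato} is invoked here.

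You have also misplaced where the Fujiwara--Kato machinery enters. It is not used in Lemma~\ref{lemma:sheafy-formal}; rather, pseudo-adhesiveness (Theorem~\ref{thm:loc-Gabber}) and the FP-approximation theory of Appendix~\ref{section:FP} are the engine behind the proof of Theorem~\ref{thm:main} itself. So the corollary's logical content is: an elementary lemma (Tate algebras are noetherian) plus a hard theorem (the main result, where the FujKato technology lives), not the other way around.
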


\begin{cor}(\cite[Theorem 1.2.11]{KedAr}, Lemma~\ref{lemma:analytic-sheafy} and Theorem~\ref{thm:main-intro}) Let $(A, A^+)$ be a Huber pair with an analytic, strongly noetherian $A$. Then the structure presheaf $\O_X$ is a sheaf of topological rings on $X=\Spa(A, A^+)$. Furthermore, $\rm{H}^i(U, \O_X)=0$ for any rational subdomain $U\subset X$ and  $i\geq 1$.
\end{cor}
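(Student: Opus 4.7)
The plan is a direct two-step deduction from the ingredients cited in the statement. First, I would invoke Lemma~\ref{lemma:analytic-sheafy}, which (this is the substantive input) asserts that any Huber ring $A$ that is analytic and strongly noetherian is automatically strongly rigid-noetherian in the sense of Definition~\ref{defn:rigid-noetherian}. Given this, the Huber pair $(A, A^+)$ satisfies the hypothesis of Theorem~\ref{thm:main-intro} verbatim, so one simply applies that theorem to conclude both that $\O_X$ is a sheaf of topological rings on $X = \Spa(A, A^+)$ and that $\rm{H}^i(U, \O_X) = 0$ for every rational subdomain $U \subset X$ and every $i \geq 1$. The proof is then complete in two lines.

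There is no genuine obstacle at this step: the hard work has been packaged into Lemma~\ref{lemma:analytic-sheafy} (handled separately) and into Theorem~\ref{thm:main-intro}. The role of the parenthetical reference to \cite[Theorem~1.2.11]{KedAr} is only to record that the conclusion itself reproduces a known result of Kedlaya; the interest of the corollary is that, via the route through Definition~\ref{defn:rigid-noetherian} and Theorem~\ref{thm:main-intro}, the analytic strongly-noetherian case now receives a proof that avoids the analytic techniques of \cite{KedAr} and instead proceeds through the formal-scheme-theoretic framework of this paper. The only thing worth double-checking while writing up the deduction is that the version of ``strongly rigid-noetherian'' fed into Theorem~\ref{thm:main-intro} is literally what Lemma~\ref{lemma:analytic-sheafy} produces, but this is immediate from unwinding Definition~\ref{defn:rigid-noetherian} and requires no further argument here.
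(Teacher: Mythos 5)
Your proposal is correct and coincides with the paper's own (implicit) proof: Lemma~\ref{lemma:analytic-sheafy} upgrades ``analytic and strongly noetherian'' to ``strongly rigid-noetherian,'' Theorem~\ref{thm:main-intro} then gives both conclusions, and the reference to \cite[Theorem 1.2.11]{KedAr} merely records that this recovers Kedlaya's result by a different method. The only point to make explicit in the write-up is the one you flag: Lemma~\ref{lemma:analytic-sheafy} is stated for complete $A$, so either assume completeness (as in \cite{KedAr}) or first replace $(A,A^+)$ by $(\wdh{A},\wdh{A}^+)$, which changes neither $\Spa$ nor the structure presheaf.
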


Let us now discuss new complications in the proof of Theorem~\ref{thm:main-intro} that do not appear in the classical proof when $A$ has a noetherian ring of definition. \smallskip

The first complication is that the finiteness results that Huber uses in his proof are not known in this generality; instead we use theory of the FP-approximated sheaves (see Appendix~\ref{section:FP}) to get finiteness only up to some torsion modules. These results were announced in \cite[Appendix C to Chapter I]{FujKato} but the proofs will appear only in their upcoming work. The second problem is that even if we try to work in a less general situation (i.e. topologically universally adhesive rings) where the finiteness results are known, there is a problem due to the issue that certain morphisms/sheaves are only of finite type and not of finite presentation. The finiteness results (probably) hold only under the finite presentation assumption. This issue can also be elegantly resolved by using the theory of FP-approximated sheaves. \smallskip

There are other sheafiness results that are somehow orthogonal to the result of this paper. For instance, Scholze showed sheafiness of perfectoid algebras in \cite{Sch0}, Buzzard and Verberkmoes generalized it to any stably uniform Tate ring in \cite{Buzzard}, and recently Hansen and Kedlaya \cite{KedHan} gave new examples of sheafy rings by verifying the stable uniformity of a certain class of rings.

\smallskip

\section{Rational Localizations}

We review the theory of rational localizations of Huber pairs. We spell out the main definitions from \cite{H1}. One reason for doing this is that the construction of (uncompleted) rational localizations does not show up much once the foundational aspects of the theory are developed, but we will really need it in our proof.

\begin{defn}\label{defn:rational-localization} Let $A$ be a Huber ring with a pair of definition $(A_0, I)$ and elements $f_1, \dots, f_n, s \in A$ such that $f_1A +f_2A + \dots +f_n A$ is an open ideal in $A$. 
\begin{itemize}
\item The {\it rational localization} $A\left(\frac{f_1}{s}, \dots, \frac{f_n}{s}\right)$ is a Huber ring such that:
\begin{enumerate}
    \item The ring structure is given by $A\left(\frac{f_1}{s}, \dots, \frac{f_n}{s}\right)=A\left[\frac{1}{s}\right]$.
    \item A ring of definition is given by 
    \[
    A_0\left[\frac{f_1}{s}, \dots, \frac{f_n}{s}\right] \subset A\left[\frac{1}{s}\right],
    \]
    where $A_0\left[\frac{f_1}{s}, \dots, \frac{f_n}{s}\right]$ is the $A_0$-subagebra of $A\left[\frac{1}{s}\right]$ generated by $\frac{f_1}{s}, \dots, \frac{f_n}{s}$ (in particular, $A_0\left[\frac{f_1}{s}, \dots, \frac{f_n}{s}\right]$ depends on $A$ and not only on $A_0$).

    \item An ideal of definition is given by $IA_0\left[\frac{f_1}{s}, \dots, \frac{f_n}{s}\right] \subset A\left[\frac{1}{s}\right]$.
\end{enumerate}
\item The {\it completed rational localization} $A\left\langle\frac{f_1}{s}, \dots, \frac{f_n}{s}\right\rangle$ is defined as the completion of the Huber ring $A\left(\frac{f_1}{s}, \dots, \frac{f_n}{s}\right)$.
\end{itemize}
\end{defn}

\begin{rmk} One can check that $A\left(\frac{f_1}{s}, \dots, \frac{f_n}{s}\right)$ is well-defined, i.e. it is indeed a Huber ring and it is independent of a choice of a couple of definition $(A_0, I)$. See \cite[Lemma and definition on p.516 and the universal property (1.2) on p.517]{H1}.
\end{rmk}

\begin{rmk}\label{rmk:completion-pair} \cite[Lemma 1.6(ii)]{H0} implies that $A\left\langle\frac{f_1}{s}, \dots, \frac{f_n}{s}\right\rangle$ is a Huber ring with a ring of definition equal to 
\[
A_0\left\langle\frac{f_1}{s}, \dots, \frac{f_n}{s}\right\rangle \coloneqq A_0\left[\frac{f_1}{s}, \dots, \frac{f_n}{s}\right]^{\wedge}, 
\]
and an ideal of definition $IA_0\left\langle\frac{f_1}{s}, \dots, \frac{f_n}{s}\right\rangle$. 
\end{rmk}

\begin{rmk}\label{rmk:rational-values} The main importance of this construction is that it gives values of the structure presheaf on rational subdomains. More precisely, suppose that $X=\Spa(A, A^+)$ for a complete Huber pair $(A, A^+)$. Then we have a topological isomorphism
\[
\O_X\left(X(\frac{f_1}{s}, \dots, \frac{f_n}{s})\right) \simeq A\left\langle\frac{f_1}{s}, \dots, \frac{f_n}{s} \right\rangle
\]
for any $f_1, \dots, f_n, s\in A$ such that the ideal $f_1A+ \dots +f_nA$ is open in $A$.  
\end{rmk}

We also review a slightly more general version of this construction that will be convenient for our later purposes.

\begin{defn}\label{defn:rational-localization-may} Let $A$ be a Huber ring, $s_1, s_2, \dots, s_n$ elements of $A$, and finite sets $F_1, F_2, \dots, F_n$ of elements of $A$ such that the ideal generated by $F_i$ is open. Let $(A_0, I)$ be a pair of definition. 
\begin{itemize}
\item The {\it rational localization} $A\left(\frac{F_1}{s_1};\dots; \frac{F_n}{s_n}\right)$ is a Huber ring such that:
\begin{enumerate}
    \item The ring structure is given by $A\left(\frac{F_1}{s_1};\dots; \frac{F_n}{s_n}\right)=A\left[\frac{1}{s_1}, \dots, \frac{1}{s_n}\right]$.
    \item A ring of definition is given by 
    \[
    A_0\left[\frac{F_1}{s_1}; \dots; \frac{F_n}{s_n}\right] \coloneqq A_0\left[\frac{f}{s_i} \ | \ i=1, \dots, n, f\in F_i\right] \subset A\left[\frac{1}{s_1}, \dots, \frac{1}{s_n}\right]. 
    \]
    \item An ideal of definition is given by $IA_0\left[\frac{F_1}{s_1}; \dots; \frac{F_n}{s_n}\right]\subset A\left(\frac{F_1}{s_1};\dots; \frac{F_n}{s_n}\right)$.
\end{enumerate}
\item The {\it completed rational localization} $A\left\langle\frac{F_1}{s_1};\dots; \frac{F_n}{s_n}\right\rangle$ is defined as the completion of the Huber ring $A\left(\frac{F_1}{s_1};\dots; \frac{F_n}{s_n}\right)$.
\end{itemize}
\end{defn}

\begin{rmk} If we set $F=\{f_1,\dots, f_n\}$, it is clear that 
\[
    A\left(\frac{F}{s}\right)=A\left(\frac{f_1}{s}, \dots, \frac{f_n}{s}\right), \     A\left\langle\frac{F}{s}\right\rangle=A\left\langle\frac{f_1}{s}, \dots, \frac{f_n}{s}\right\rangle
\]
\[
A_0\left[\frac{F}{s}\right] = A_0\left[\frac{f_1}{s}, \dots, \frac{f_n}{s}\right], \ A_0\left\langle\frac{F}{s}\right\rangle = A_0\left\langle\frac{f_1}{s}, \dots, \frac{f_n}{s}\right\rangle.
\] 
\end{rmk}

\begin{rmk}\label{rmk:rational-values-many} Similarly to Remark~\ref{rmk:rational-values}, we have a canonical topological isomorphism
\[
\O_X\left(X(\frac{F_1}{s_1}) \cap \dots \cap X(\frac{F_n}{s_n})\right) \simeq A\left\langle \frac{F_1}{s_1}; \dots: \frac{F_n}{s_n} \right\rangle
\]
for any Huber pair $(A, A^+)$ with complete $A$, elements $s_1, \dots, s_n \in A$, and finite sets $F_1, \dots, F_n \subset A$ such that the ideal generated by $F_i$ is open for any $i$.
\end{rmk}

\begin{defn}\label{defn:rigid-noetherian} Let $(A_0, I)$ be a pair of a ring $A_0$ and a finitely generated ideal $I$. We say that $A_0$ is {\it topologically universally rigid-noetherian} if $\Spec \wdh{A_0}\langle X_1, \dots, X_d\rangle$ is noetherian outside $I\wdh{A_0}\langle X_1, \dots, X_d\rangle$ for every $d\geq 0$\footnote{This definition differs from \cite[Definition 0.8.4.3]{FujKato} as we do not require $A_0$ to be noetherian outside $I$.}. \smallskip

A Huber ring $A$ is {\it strongly rigid-noetherian} if $A$ admits a pair of definition $(A_0, I)$ that is topologically universally rigid-noetherian. 
\end{defn}

\begin{rmk} We want to emphasize that strong rigid-noetherianness of $A$ does not imply that $A$ is noetherian. For instance, the ring $\O_{\C_p}$ is strongly rigid-noetherian, but $\O_{\C_p}$ is not noetherian. 
\end{rmk}

\begin{rmk}\label{rmk:strongly-noeth} The definition of a strongly rigid-noetherian Huber pair does not depend on a choice of a pair of definition $(A_0, I)$. Indeed, it clearly does not depend on a choice of an ideal of definition $I$ inside a fixed ring of definition $A_0$. \smallskip

Now we may and do assume that $A$ is complete. Suppose $A_0$ and $A_1$ are two rings of definition, so \cite[Corollary 1.3]{H0} implies that $A_0\cdot A_1$ is again a ring of definition, so it suffices to show that claim under the additional assumption that $A_0 \subset A_1$. If $I$ is an ideal of definition in $A_0$, then $IA_1$ is an ideal of definition in $A_1$. So it is enough to show that $\Spec A_0$ is noetherian outside $f\in I$ if and only if so is $\Spec A_1$\footnote{Then apply the same reasoning to $A_0\langle \ud{T}\rangle$ and $A_1\langle \ud{T}\rangle$.}.  Now \cite[Lemma 3.7]{H0} ensures that $(A_0)_f \to A_f$ and $(A_1)_f \to A_f$ are isomorphisms. This finishes the proof.
\end{rmk}

\begin{lemma}\label{lemma:analytic-sheafy} Let $A$ be a complete analytic ring (in the sense of \cite[Definition 1.1.2]{KedAr}). Then $A$ is strongly noetherian if and only if it is strongly rigid-noetherian. In particular, a complete Tate ring is strongly noetherian if and only if it is strongly rigid-noetherian.
\end{lemma}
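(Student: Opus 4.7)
The plan is to reduce the lemma to the single scheme-theoretic fact that for any complete analytic Huber ring $B$ with pair of definition $(B_0, J)$, the natural morphism $\Spec B \to \Spec B_0$ identifies $\Spec B$ with the open subscheme $\Spec B_0 \setminus V(JB_0)$. Once this is in hand, apply it with $B = A\langle X_1, \dots, X_d\rangle$, $B_0 = A_0\langle X_1, \dots, X_d\rangle$, and $J = I A_0\langle \ud X\rangle$ for every $d \geq 0$. This immediately gives the equivalence $A\langle \ud X\rangle$ noetherian iff $\Spec A_0\langle \ud X\rangle$ is noetherian outside $I$, which is exactly the claimed equivalence between strong noetherianness and strong rigid-noetherianness. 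The final sentence about Tate rings is a direct special case, since a topologically nilpotent unit $\pi$ satisfies $\pi^N \in I$ for some $N$, and then $\pi^N \cdot \pi^{-N} = 1$ shows $IA=A$, so $A$ is analytic.

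To prove the key identification, first observe that $B = A\langle \ud X\rangle$ is analytic as soon as $A$ is: the relation $1 = \sum_j i_j a_j$ in $A$ with $i_j \in I$ and $a_j \in A$ extends verbatim to $A\langle \ud X\rangle$, so $JB = B$. Consequently no prime of $B$ contains $J$, and the map $\Spec B \to \Spec B_0$ factors through the open subscheme $\Spec B_0 \setminus V(J) = \bigcup_{f \in J} D(f)$. Next, invoke \cite[Lemma 3.7]{H0}, already used in Remark~\ref{rmk:strongly-noeth}, to obtain isomorphisms $(B_0)_f \xrightarrow{\sim} B_f$ for every $f \in J$. Gluing these over an affine cover of the open subscheme promotes $\Spec B \to \Spec B_0 \setminus V(J)$ to an isomorphism of schemes. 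Hence $B$ is noetherian iff this open subscheme is a noetherian scheme, i.e.\ iff $\Spec B_0$ is noetherian outside $J$.

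No real obstacle is expected; the only bookkeeping is the compatibility between the $\widehat{A_0}\langle \ud X\rangle$ appearing in Definition~\ref{defn:rigid-noetherian} and the ring of definition $A_0\langle \ud X\rangle$ of $A\langle \ud X\rangle$. Since $A$ is complete, any ring of definition $A_0$ is open (hence closed) in $A$ and therefore complete, so $\widehat{A_0} = A_0$ and the two rings coincide. Beyond \cite[Lemma 3.7]{H0}, which is already available and does the bulk of the work, the argument is purely a scheme-theoretic gluing.
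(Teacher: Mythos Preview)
Your proposal is correct and follows essentially the same route as the paper: reduce to showing that $\Spec B \to \Spec B_0 \setminus V(J)$ is an isomorphism for $B = A\langle \ud X\rangle$ analytic, and then invoke \cite[Lemma 3.7]{H0} to identify $(B_0)_f \simeq B_f$ for $f$ in the ideal of definition. The paper organizes this by first replacing $A$ with $A\langle T_1,\dots,T_n\rangle$ and citing \cite[Lemma 1.1.3]{KedAr} for the triviality of open ideals, whereas you argue directly that $1 \in IA\langle \ud X\rangle$ from the analytic hypothesis, but the substance is identical.
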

\begin{proof}
    Pick a pair of definition $(I, A_0)$ in $A$. We need to check that, for any $n\geq 0$, a scheme $\Spec A_0\langle T_1, \dots, T_n\rangle \setminus \rm{V}(IA_0\langle T_1, \dots, T_n\rangle)$ is noetherian if and only if $A\langle T_1, \dots, T_n\rangle$ is noetherian. After replacing $A$ with $A\langle T_1, \dots, T_n\rangle$, it suffices to show it for $n=0$. Therefore, it is enough to show that the natural morphism
    \[
    \Spec A \to \Spec A_0 \setminus \rm{V}(I)
    \]
    is an isomorphism. Now \cite[Lemma 1.1.3]{KedAr} ensures that the only open ideal in $A$ is trivial, and thus the the claim follows from \cite[Lemma 3.7]{H0}. 
\end{proof}

\begin{lemma}\label{lemma:sheafy-formal} Let $k^+$ be a complete microbial valuation ring (in the sense of Definition~\cite[Definition 9.1.4]{Seminar}), and $A$ topologically finite type $k^+$-algebra. Then $A$ is a strongly rigid-noetherian. 
\end{lemma}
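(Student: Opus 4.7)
The plan is to take $A$ itself as a ring of definition and reduce the claim to the classical noetherianness of Tate algebras over the fraction field $k \coloneqq k^+[\varpi^{-1}]$, where $\varpi \in k^+$ is a pseudo-uniformizer supplied by microbiality. Since $A$ is topologically finite type over the complete ring $k^+$, it is itself $\varpi$-adically complete, so $(A,\varpi A)$ is a pair of definition and $\widehat{A}\langle X_1,\dots,X_d\rangle = A\langle X_1,\dots,X_d\rangle$. To verify Definition~\ref{defn:rigid-noetherian} it therefore suffices to show that
\[
\Spec A\langle X_1,\dots,X_d\rangle \setminus V(\varpi)
\]
is noetherian for every $d\geq 0$.

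The key reduction is the observation that $A\langle X_1,\dots,X_d\rangle$ is again topologically finite type over $k^+$, so it is enough to prove the uniform statement that $B[\varpi^{-1}]$ is noetherian for every topologically finite type $k^+$-algebra $B$; indeed, $\Spec B \setminus V(\varpi) = \Spec B[\varpi^{-1}]$. Writing $B$ as a completed quotient $k^+\langle T_1,\dots,T_n\rangle/J$ and inverting $\varpi$ yields
\[
B[\varpi^{-1}] \cong k\langle T_1,\dots,T_n\rangle / \bigl(J\cdot k\langle T_1,\dots,T_n\rangle\bigr),
\]
where $k\langle T_1,\dots,T_n\rangle \coloneqq k^+\langle T_1,\dots,T_n\rangle[\varpi^{-1}]$ is the Tate algebra over the complete nonarchimedean field $k$.

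The main obstacle, and essentially the only nontrivial input, is the classical fact that $k\langle T_1,\dots,T_n\rangle$ is noetherian for every $n\geq 0$. When $k^+$ has rank $1$ this is the theorem of Tate; in the general complete microbial case one uses the rank-$1$ completeness of $k$ (coming from $\varpi$-adic completeness of $k^+$, which forces $\bigcap_n \varpi^n k^+ = 0$) together with the version recorded in \cite{FujKato}. Granted this input, $B[\varpi^{-1}]$ is a quotient of a noetherian ring, hence noetherian, and the proof is complete.
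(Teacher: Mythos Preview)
Your proof is correct and follows essentially the same route as the paper's: choose a pseudo-uniformizer $\varpi$, observe that $A\langle X_1,\dots,X_d\rangle$ is again topologically finite type over $k^+$, and reduce to the noetherianness of the Tate algebra $k\langle T_1,\dots,T_m\rangle$ over the fraction field $k$. The paper cites \cite[Proposition~2.2/14]{B} for this last input rather than \cite{FujKato}, and does not pause over the completeness of $A$ or the higher-rank issue you flag, but the skeleton of the argument is identical.
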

\begin{proof}
    Pick a pseudo-uniformizer $\varpi\in k^+$. We need to show that $A\langle T_1,\dots, T_n\rangle[\frac{1}{\varpi}]$ is noetherian for any $n\geq 0$. A $k^+$-algebra $A\langle T_1,\dots, T_n\rangle$ is topologically finite type, e.g. there is a surjection
    \[
    k^+\langle T_1, \dots, T_m \rangle  \to A\langle T_1, \dots, T_n\rangle.
    \]
    Therefore, it suffices to show that $k^+\langle T_1, \dots, T_m \rangle[\frac{1}{\varpi}]$ is noetherian for any $m\geq 0$. But this is just the usual Tate algebra $k\langle T_1, \dots, T_m \rangle$ over the non-archimedean field $k\coloneqq \rm{Frac}(k^+)$. It is noetherian by \cite[Proposition 2.2/14]{B}.
\end{proof}

\begin{lemma}\label{lemma:strongly-noeth-subspace} Let $A$ be a strongly rigid-noetherian Huber ring, and $f_1, \dots, f_n, s \in A$ elements such that $f_1A +f_2A + \dots +f_n A$ is an open ideal in $A$. Then the completed rational localization \[A\left\langle\frac{f_1}{s}, \dots, \frac{f_n}{s}\right\rangle\] is a strongly rigid-noetherian Huber ring.
\end{lemma}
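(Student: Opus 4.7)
My plan is to work with the canonical pair of definition $(B_0, IB_0)$ of $B := A\langle f_1/s, \dots, f_n/s\rangle$ provided by Remark~\ref{rmk:completion-pair}, where $B_0 := A_0\langle f_1/s, \dots, f_n/s\rangle$ is the $I$-adic completion of $A_0[f_1/s, \dots, f_n/s]$ and $(A_0, I)$ is a chosen topologically universally rigid-noetherian pair of definition for $A$. Since $B_0$ is already $I$-adically complete, the task reduces to showing that for every $d \geq 0$ the scheme $\Spec B_0\langle X_1, \dots, X_d\rangle \setminus V(IB_0\langle X_1, \dots, X_d\rangle)$ is noetherian, where $B_0\langle X_1, \dots, X_d\rangle$ denotes the $I$-adic completion of $B_0[X_1, \dots, X_d]$.

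The key idea is to realize $B_0\langle X_1, \dots, X_d\rangle$ as a continuous quotient of $A_0\langle Y_1, \dots, Y_n, X_1, \dots, X_d\rangle$. The $A_0$-algebra map
\[
A_0[Y_1, \dots, Y_n] \twoheadrightarrow A_0[f_1/s, \dots, f_n/s], \quad Y_i \mapsto f_i/s,
\]
is a surjection, continuous for the $I$-adic topologies on both sides (both are $A_0$-algebras with $I$ giving a basis of neighborhoods of $0$). Adjoining $X_1, \dots, X_d$ on both sides and $I$-adically completing (using that iterated $I$-adic completion of $A_0[Y,X]$ coincides with a single $I$-adic completion of the polynomial ring in all variables) will yield a continuous surjection
\[
A_0\langle Y_1, \dots, Y_n, X_1, \dots, X_d\rangle \twoheadrightarrow B_0\langle X_1, \dots, X_d\rangle,
\]
where I invoke the standard fact that $I$-adic completion of a surjection of $A_0$-modules remains surjective when $I$ is finitely generated.

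Given this presentation, the proof finishes quickly: the strong rigid-noetherianness of $A$ applied with $n+d$ variables says $\Spec A_0\langle Y_1, \dots, Y_n, X_1, \dots, X_d\rangle \setminus V(I)$ is noetherian, and the surjection above induces a closed immersion
\[
\Spec B_0\langle X_1, \dots, X_d\rangle \setminus V(IB_0\langle X_1, \dots, X_d\rangle) \hookrightarrow \Spec A_0\langle Y_1, \dots, Y_n, X_1, \dots, X_d\rangle \setminus V(I),
\]
and a closed subscheme of a noetherian scheme is noetherian. The main obstacle is really only a technical one: carefully establishing surjectivity and identifying the two iterated completions in the middle display. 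These are routine facts about adic topologies with finitely generated ideals of definition, but they must be phrased precisely to avoid the temptation of circular reasoning with the original Huber structure of $B$.
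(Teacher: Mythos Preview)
Your proposal is correct and follows essentially the same approach as the paper: both arguments use the pair of definition from Remark~\ref{rmk:completion-pair} and the surjection $A_0\langle Y_1,\dots,Y_n\rangle \twoheadrightarrow A_0\langle f_1/s,\dots,f_n/s\rangle$ to conclude. The only difference is packaging: the paper simply observes that $B_0$ is a topologically finitely generated $A_0$-algebra and asserts that this inherits topological universal rigid-noetherianness, whereas you unpack that assertion explicitly by adjoining the extra variables $X_1,\dots,X_d$, completing, and invoking the closed-immersion-into-noetherian-scheme argument.
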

\begin{proof}
Without loss of generality, we can assume that $A$ is complete, equivalently, any ring of definition $A_0$ is complete. Now, it suffices to show that $A\left\langle\frac{f_1}{s}, \dots, \frac{f_n}{s}\right\rangle$ admits a topologically universally rigid-noetherian pair of definition. Remark~\ref{rmk:completion-pair} implies that this ring admits a pair of definition $\left(A_0\left\langle\frac{f_1}{s}, \dots, \frac{f_n}{s}\right\rangle, IA_0\left\langle\frac{f_1}{s}, \dots, \frac{f_n}{s}\right\rangle\right)$. Clearly, there a surjection
\[
A_0\left\langle X_1, \dots, X_n\right\rangle \to A_0\left\langle\frac{f_1}{s}, \dots, \frac{f_n}{s}\right\rangle.
\]
Thus, $A_0\left\langle\frac{f_1}{s}, \dots, \frac{f_n}{s}\right\rangle$ is a topologically finitely generated $A_0$-algebra. Therefore, the pair \[\left(A_0\left\langle\frac{f_1}{s}, \dots, \frac{f_n}{s}\right\rangle, IA_0\left\langle\frac{f_1}{s}, \dots, \frac{f_n}{s}\right\rangle\right)\] is topologically universally rigid-noetherian as $(A_0, I)$ is so.
\end{proof}

\begin{defn}\label{defn:pseudo-adhesive} A pair $(A, I)$ of a ring $A$ and a finitely generated ideal $I\subset A$ is {\it pseudo-adhesive} (or $A$ is {\it $I$-adically pseudo-adhesive}) if $\Spec A$ is noetherian outside $\rm{V}(I)$ and any finite $A$-module $M$ has bounded $I$-power torsion (i.e. $M[I^{\infty}]=M[I^n]$ for some $n$). \smallskip

A pair $(A, I)$ of finite type is {\it universally pseudo-adhesive} (or $A$ is {\it $I$-adically universally pseudo-adhesive}) if $(A[X_1, \dots, X_d], IA[X_1, \dots, X_d])$ is pseudo-adhesive for any $d\geq 0$.
\end{defn}

\begin{rmk}\label{rmk:adhesive-finite-type} It is easy to see that any finite type $A$-algebra over a universally pseudo-adhesive pair $(A, I)$ is $I$-adically universally pseudo-adhesive.
\end{rmk}

The following theorem of Fujiwara, Gabber, and Kato will play a crucial role in what follows. It will give us a way to apply results from Appendix~\ref{section:FP} in our context.

\begin{thm}\label{thm:loc-Gabber}\cite[Theorem 0.8.4.8]{FujKato} Let $(A_0, I)$ be a complete topologically universally rigid-noetherian pair. Then it is universally pseudo-adhesive. 
\end{thm}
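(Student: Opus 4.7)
The plan is to deduce pseudo-adhesiveness of each uncompleted polynomial ring $B := A_0[X_1,\dots,X_d]$ from the rigid-noetherianness of its $I$-adic completion $\wdh{B} = A_0\langle X_1,\dots,X_d\rangle$, which holds for every $d$ by hypothesis (using that $A_0$ is already complete, so no further completion of $A_0$ appears). Concretely, I need to verify that $\Spec B \setminus \rm{V}(IB)$ is noetherian and that every finite $B$-module $M$ satisfies $M[I^\infty] = M[I^n]$ for some $n$.

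The technical heart of the argument is to show that the completion map $B \to \wdh{B}$ is flat, and moreover that $B[1/f] \to \wdh{B}[1/f]$ is faithfully flat for every $f \in I$. In the classical noetherian setting this follows from Artin-Rees, but in the rigid-noetherian world it requires the full machinery of adhesive rings developed in \cite{FujKato}. Granted this, descent of noetherianness is immediate: any ascending chain of ideals in $B[1/f]$ extends to one in $\wdh{B}[1/f]$, which stabilizes by hypothesis, and faithful flatness pulls the stabilization back, so $\Spec B \setminus \rm{V}(IB)$ is noetherian.

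For the bounded-torsion statement, I would first prove the analogous bound for finite $\wdh{B}$-modules. Given such a $\wdh{M}$, the submodules $\wdh{M}[I^n]$ form an ascending chain that vanishes after restriction to $\Spec \wdh{B} \setminus \rm{V}(I\wdh{B})$ (where $I$ is invertible), and a coherence argument using noetherianness of $\wdh{B}[1/f]$ for generators $f$ of $I$ together with completeness of $\wdh{B}$ converts this local vanishing into a uniform bound $\wdh{M}[I^\infty]=\wdh{M}[I^N]$. To descend the bound from $\wdh{M} := M \otimes_B \wdh{B}$ back to $M$, one exploits the $I$-adic separatedness of $B$ (so that $M \hookrightarrow \wdh{M}$) together with the flatness established above, whence $M[I^\infty] \subseteq \wdh{M}[I^\infty]$ inherits the same bound.

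The chief obstacle is the flatness input in the second step: extracting (faithful) flatness of the completion and completion-after-localization maps from the mere noetherianness of $\wdh{B}$ outside $\rm{V}(I\wdh{B})$ is highly nontrivial outside the noetherian realm, and it is precisely the deep content of the adhesive-ring theory developed in \cite{FujKato}. This is why the statement is quoted as a theorem of Fujiwara-Gabber-Kato rather than deduced here from first principles.
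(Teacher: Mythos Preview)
The paper does not prove this theorem at all: it is simply quoted from \cite[Theorem 0.8.4.8]{FujKato} and used as a black box. So there is no ``paper's own proof'' to compare against, and your final paragraph is exactly right in spirit---this result is imported wholesale from Fujiwara--Gabber--Kato.

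That said, the roadmap you sketch before that final paragraph has a genuine circularity you should be aware of. You propose to (i) establish flatness of $B \to \wdh{B}$ and faithful flatness of $B[1/f] \to \wdh{B}[1/f]$, then (ii) descend both noetherianness and bounded $I^\infty$-torsion along these maps. The trouble is that, outside the noetherian world, flatness of the $I$-adic completion is essentially \emph{equivalent} to an Artin--Rees type statement, and Artin--Rees for finitely generated modules is precisely the bounded-torsion half of pseudo-adhesiveness (cf.\ \cite[\S0.8.2]{FujKato}). So step (i) already presupposes what you are trying to prove in step (ii). Relatedly, your claimed injection $M \hookrightarrow \wdh{M}$ for a finite $B$-module $M$ does not follow from $I$-adic separatedness of $B$ alone; it needs either faithful flatness of $B\to\wdh B$ or $I$-adic separatedness of $M$ itself, both of which again sit downstream of Artin--Rees.

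The actual argument in \cite{FujKato} (due in large part to Gabber) proceeds quite differently and does not bootstrap through flatness of completion; rather it establishes the bounded-torsion property directly for the completed ring and its polynomial extensions by exploiting completeness and the rigid-noetherian hypothesis, and the uncompleted statement is then extracted. Your instinct that ``this is the deep content'' is correct, but the dependency runs the other way: pseudo-adhesiveness comes first, and flatness of completion is a consequence, not an input.
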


\section{Sheafiness Of Strognly Noetherian Huber Pairs}

We show that any strongly rigid-noetherian Huber ring $A$ is sheafy. Our proof follows Huber's proof of the same result for Huber pairs with a noetherian ring definition very closely (see \cite[Theorem 2.2]{H1}). The main obstacle why his proof does not work in this more general situation is that he needs to use certain finiteness of cohomology groups from \cite{EGA3} that require the noetherian hypothesis. Instead, we use the results from Section~\ref{section:FP} in place of the results from \cite{EGA3}. However, we want to point out one complication is that Theorem~\ref{thm:FP-approx-proper} does not give an honest finiteness result. 

The following lemma plays a crucial role in our argument:

\begin{lemma}\label{lemma:proof-standard-covering}\cite[Lemma 2.6]{H1} Let $(A, A^+)$ be a complete Huber pair, and $\{V_j\}_{j\in J}$ be an open
covering of $X=\Spa(A, A^+)$. Then there exist $f_0, \dots, f_n\in A$ such that $A  = f_0A + f_1A +\dots +f_n A$ and, for every $i \in \{0, \dots , n\}$, the rational subset $ X\left(\frac{f_0}{f_i}, \dots, \frac{f_n}{f_i}\right)$ is contained in some $V_j$. 
\end{lemma}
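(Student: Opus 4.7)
The strategy mirrors the classical Tate/Huber refinement argument: exploit quasi-compactness of $X$ to pass to a finite cover by rational subdomains, then refine it explicitly via a product construction.

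Since $X=\Spa(A,A^+)$ is a spectral space (hence quasi-compact) and rational subdomains form a basis of its topology, the cover $\{V_j\}_{j\in J}$ admits a refinement by finitely many rational subdomains. Thus there exist finite sets $F_k=\{f_{k,1},\dots,f_{k,n_k}\}\subset A$ generating open ideals of $A$, elements $s_k\in A$, and indices $j(k)\in J$ for $k=1,\dots,m$, such that $X=\bigcup_{k=1}^m X(\tfrac{F_k}{s_k})$ and $X(\tfrac{F_k}{s_k})\subseteq V_{j(k)}$. Enlarging $F_k$ to include $s_k$ (which does not change the rational subdomain), we assume $s_k\in F_k$.

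The core construction is the following product set: put
\[
\mathcal{G} = \bigl\{\, f_{1,i_1}f_{2,i_2}\cdots f_{m,i_m} : f_{k,i_k}\in F_k \,\bigr\},
\]
and enumerate $\{f_0,\dots,f_N\} = \mathcal{G}\cup\{1\}$. Since $1$ appears among the $f_i$, the ideal $(f_0,\dots,f_N)=A$ automatically. It remains to verify that the standard rational cover $\{X(\tfrac{f_0,\dots,f_N}{f_i})\}_i$ refines $\{U_k\}$, where $U_k=X(\tfrac{F_k}{s_k})$. For a piece $\mathcal{P}_g := X(\tfrac{f_0,\dots,f_N}{g})$ with $g=f_{1,i_1}\cdots f_{m,i_m}$, comparing $g$ with the product obtained by replacing $f_{k,i_k}$ by an arbitrary $f_{k,j}\in F_k$ (which also lies in $\mathcal{G}$) forces $|v(f_{k,j})|\le|v(f_{k,i_k})|$ for every $v\in\mathcal{P}_g$. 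Hence $f_{k,i_k}$ maximizes $|v|$ on $F_k$, so $\mathcal{P}_g\subseteq X(\tfrac{F_k}{f_{k,i_k}})$ for every $k$. Whenever some $i_k$ satisfies $f_{k,i_k}=s_k$, this gives $\mathcal{P}_g\subseteq U_k\subseteq V_{j(k)}$ directly.

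The main obstacle — and technical heart of the proof — is to handle the remaining pieces, namely those for which no factor $f_{k,i_k}$ equals $s_k$, together with the piece corresponding to $f_i=1$. For these, the covering hypothesis is used as follows: any $v\in\mathcal{P}_g$ lies in some $U_l$, and since $s_l\in F_l$ must also be maximal on $F_l$ at $v$, we get $|v(s_l)|=|v(f_{l,i_l})|$; equivalently $\mathcal{P}_g$ is covered by the pieces obtained by substituting $s_l$ for $f_{l,i_l}$ for some $l$. One therefore enlarges $\mathcal{G}$ by adjoining all such substituted products $s_l\cdot\prod_{k\ne l}f_{k,i_k}$ and re-enumerates, so that each piece of the resulting (still standard) cover has at least one $s$-factor in its defining product; the piece for $f_i=1$ is handled analogously, since the bound $|v(g)|\le 1$ for all $g\in\mathcal{G}$ together with the covering condition forces containment in a specific $U_k$. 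Carrying out this bookkeeping — and checking that the enlarged set still satisfies $(f_0,\dots,f_{N'})=A$ and genuinely refines $\{V_j\}_{j\in J}$ — is the combinatorial heart of the argument.
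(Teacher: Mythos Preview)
The paper does not give its own proof of this lemma; it simply cites \cite[Lemma 2.6]{H1}. So let me assess your argument directly.

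Your reduction to a finite rational cover $\{U_k=X(F_k/s_k)\}_{k=1}^m$ with $s_k\in F_k$ is correct, and so is the verification that whenever the $k$-th factor of $g\in\mathcal G$ equals $s_k$ one has $\mathcal P_g\subseteq U_k$. The gap is in your handling of the remaining pieces. You observe that for a ``bad'' product $g$ one has $\mathcal P_g\subseteq\bigcup_l\mathcal P_{g'_l}$, where $g'_l$ substitutes $s_l$ for $f_{l,i_l}$, and then propose to ``enlarge $\mathcal G$ by adjoining all such substituted products.'' But since $s_l\in F_l$, every $g'_l$ is \emph{already} an element of $\mathcal G$; the adjunction is a non-operation and the cover is unchanged. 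What your argument actually establishes is only that $\mathcal P_g$ is \emph{covered} by good pieces, not that it lies in a single $V_j$ --- and the latter is exactly what the lemma demands. The piece $\mathcal P_1$ has the same defect: your ``analogous'' treatment would again show only that it is covered by the $U_k$, which is vacuous.

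This failure is not hypothetical. Take $A=\Q_p\langle T\rangle$, $F_1=F_2=\{T,\,1-T,\,1\}$, $s_1=T$, $s_2=1-T$; then $U_1=\{|T|=1\}$ and $U_2=\{|1-T|=1\}$ cover $X$. Your product set $\mathcal G$ contains $1=c\cdot c$ (with $c=1$), and the corresponding piece $\mathcal P_1=\{v:|g'|\le 1\ \forall g'\in\mathcal G\}$ is all of $X$, which lies in neither $U_1$ nor $U_2$. By contrast, the standard cover $\{X(\tfrac{T,\,1-T}{T}),\,X(\tfrac{T,\,1-T}{1-T})\}$ does refine $\{U_1,U_2\}$, so the lemma holds --- but not via your construction. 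Huber's actual proof organizes the refinement differently (roughly, one must first arrange the data so that every resulting denominator is forced to carry an $s$-factor, rather than adjoining $1$ after the fact); this is precisely the work you have deferred to unspecified ``bookkeeping,'' and it is where the real content lies.
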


\begin{defn}\label{defn:proof-standard-covering} A {\it standard covering} of $X=\Spa(A, A^+)$ is a covering of the form 
\[
    X=\bigcup_{i=0}^n X\left(\frac{f_0}{f_i}, \dots, \frac{f_n}{f_i}\right)
\]
for some $f_0, \dots, f_n\in A$ such that $A  = f_0A + f_1A +\dots +f_n A$. 
\end{defn}

\begin{defn} A morphism of topological groups $\varphi \colon A\to B$ is called {\it strict} of it is continuous and $A \to \varphi(A)$ is open for the subspace topology on the target.
\end{defn}



\begin{defn} A presheaf of topological rings $\F$ on a topological space $X$ is a {\it sheaf} if $\F$ is a sheaf of sets and the natural map $\F(U) \to \prod_{i\in I} \F(U_i)$ is a topological embedding for any covering $U=\cup_{i\in I} U_i$.
\end{defn}

\begin{thm}\label{thm:main} Let $(A, A^+)$ be a strongly rigid-noetherian Huber pair. Then the structure presheaf $\O_X$ is a sheaf of topological rings on $X=\Spa(A, A^+)$. Furthermore, $\rm{H}^i(U, \O_X)=0$ for any rational subdomain $U\subset X$ and  $i\geq 1$.
\end{thm}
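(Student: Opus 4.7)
The plan is to follow Huber's strategy for Huber pairs with a noetherian ring of definition (\cite[Theorem~2.2]{H1}), substituting the FP-approximation machinery of Appendix~\ref{section:FP} for the \cite{EGA3} finiteness theorems Huber relies on. First I would reduce to the following concrete statement: for any standard cover
\[
X = \bigcup_{i=0}^n X\!\left(\tfrac{f_0}{f_i},\dots,\tfrac{f_n}{f_i}\right)
\]
as in Definition~\ref{defn:proof-standard-covering}, the augmented alternating Cech complex
\[
0 \to A \to \prod_i \O_X(U_i) \to \prod_{i<j} \O_X(U_i \cap U_j) \to \cdots
\]
is strictly exact, where $U_i = X(f_0/f_i,\dots,f_n/f_i)$ and the values of $\O_X$ on intersections are computed by Remark~\ref{rmk:rational-values-many}. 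This reduction uses Lemma~\ref{lemma:proof-standard-covering} to refine any open cover to a standard cover, together with Lemma~\ref{lemma:strongly-noeth-subspace} to propagate the strongly rigid-noetherian hypothesis to rational subdomains, so that both sheafiness and the vanishing $\rm H^i(U, \O_X) = 0$ follow by a standard inductive Cech argument.

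Next I would translate this into a cohomological statement on a formal scheme. The open ideal $(f_0,\dots,f_n) \subset A$ admits a standard admissible formal blow-up $\pi \colon \X \to \Spf A_0$ whose standard affine cover $\{\sU_i = \Spf A_0\langle f_0/f_i,\dots,f_n/f_i\rangle\}$ matches the terms of the Cech complex above after inverting the appropriate denominators. Thus exactness of that complex is controlled by the cohomology of $\rm R\pi_*\O_\X$. Since $(A_0,I)$ is complete and topologically universally rigid-noetherian, Theorem~\ref{thm:loc-Gabber} makes it universally pseudo-adhesive, and Theorem~\ref{thm:FP-approx-proper} applies to the proper map $\pi$ to identify each $\rm R^i\pi_*\O_\X$ as an FP-approximated $A_0$-module. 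Because $\pi$ is an isomorphism outside $\rm V(I)$, these higher direct images are themselves $I$-power torsion for $i\geq 1$; combined with FP-approximation and pseudo-adhesiveness the torsion is bounded, and so the higher cohomology vanishes after the localizations implicit in passing from formal-scheme sections to $\O_X$-sections, while $\rm H^0$ recovers $A$ itself.

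The final ingredient is to upgrade algebraic exactness to \emph{strict} exactness of the topological Cech complex, which is what it means for $\O_X$ to be a sheaf of topological rings. Bounded $I$-power torsion at each Cech stage (coming again from pseudo-adhesiveness, via Definition~\ref{defn:pseudo-adhesive}) is precisely what allows one to commute $I$-adic completion with cohomology, to conclude that the images of the Cech differentials are $I$-adically closed, and to identify the subspace and quotient topologies on those images. From this one extracts the topological embedding required in the sheaf-of-topological-rings condition.

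The hardest part will be bridging the gap between honest coherence and the ``finiteness up to $I$-power torsion'' that Theorem~\ref{thm:FP-approx-proper} provides. Huber's argument invokes \cite{EGA3}-style coherence at several points --- to commute completion past cohomology, and to secure strictness of Cech differentials --- and each such invocation must be replaced by a careful bookkeeping step in which the accumulated $I$-power torsion is controlled via Theorem~\ref{thm:loc-Gabber} and either absorbed into a subsequent localization or estimated directly. This is the step where the machinery of FP-approximated sheaves genuinely earns its keep over the naive noetherian picture.
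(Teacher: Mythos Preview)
Your reduction to standard covers is correct, and the role you assign to FP-approximation for the strictness step is broadly right. But there is a genuine gap in how you obtain exactness.

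You propose to identify the adic \v{C}ech complex with the \v{C}ech complex of the formal blow-up $\mathfrak X\to\Spf A_0$ ``after inverting the appropriate denominators'', and then to kill the $I$-torsion higher direct images by that localization. This works in the Tate case (where $A=A_0[1/\varpi]$ and indeed $A\langle\cdots\rangle=A_0\langle\cdots\rangle[1/\varpi]$), but not for a general strongly rigid-noetherian Huber ring: there is no localization carrying $A_0\langle f_j/f_i\rangle$ to $A\langle f_j/f_i\rangle$. In particular your claim that ``$\rm H^0$ recovers $A$'' already fails---$\rm H^0$ of the blow-up is an $A_0$-algebra $B$ with $A_0\subset B\subset A$, typically strict on both sides---and the bounded $I$-torsion in higher degrees has nowhere to go.

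The paper repairs this by working with ordinary schemes and introducing a second object alongside the blow-up $P=\mathrm{Proj}\bigoplus J^m$ (with $J=\sum f_iA_0\subset A$): namely $U=\Spec A$ together with the affine morphism $s\colon U\to P$ coming from $JA=A$. The \emph{uncompleted} adic \v{C}ech complex is then the \v{C}ech complex of $s_*\O_U$ on $P$, whose terms are literally $A[1/(f_{j_0}\cdots f_{j_i})]$; its exactness is immediate from $\rm H^i(P,s_*\O_U)=\rm H^i(U,\O_U)$, which vanishes for $i>0$ and equals $A$ for $i=0$ since $U$ is affine. No torsion argument is needed here. The \v{C}ech complex of $\O_P$---your formal complex---is used only for strictness: one checks that $\check C^i(\mathcal P,\O_P)\hookrightarrow\check C^i(\mathcal P,s_*\O_U)$ identifies the source with a ring of definition of the target, and then proves the differentials on the $\O_P$-complex are open by showing $\rm H^{i+1}(P,I^m\O_P)\to\rm H^{i+1}(P,I^k\O_P)$ is zero for $m\gg k$. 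This last step is where Theorems~\ref{thm:FP-approx-proper} and~\ref{thm:FP-approx-topology-cohomology} enter, together with the observation that $P\to\Spec A_0$ is an isomorphism off $\rm V(I)$. A lemma of Bourbaki then transports strict exactness to the completion. Your sketch of the strictness step (``bounded torsion lets one commute completion with cohomology'') is pointing at the right phenomenon but is not yet an argument; the actual mechanism is the openness of the integral differentials just described.
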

One can actually prove the same result for sheaves $M\otimes \O_X$ for any finite $A$-module $M$. The proof should be slightly modified as done in \cite[Theorem 2.5]{H1}. We prefer to write the argument only in the case of the structure presheaf as it simplifies the exposition significantly. 
\begin{proof}
{\it Step 0. We may assume that $A$ is complete}: This follows from the fact that there is a canonical isomorphism 
\[
\left(\Spa(A, A^+), \O_{A, A^+}\right) \simeq \left(\Spa(\wdh{A}, \wdh{A}^+), \O_{\wdh{A}, \wdh{A}^+}\right).
\] 

{\it Step 1. We reduce theorem to showing that $\check{C}_{aug}^\bullet(\cal{U}, \O_X)$ is exact with strict differentials for a standard covering $\cal{U}=\{U_0, \dots, U_n\}$ of $X$}: The sheaf condition means that the sequence
\[
0 \to \O_X(U) \xr{d} \prod_i \O_X(U_i) \to \prod_{i<j} \O_{X}(U_i\cap U_j)
\]
is a exact with strict $d$ for any for any covering $\cal{U}$ of an open $U$. Since rational subsets form a basis of $X$, it suffices to show the claim for a covering of a rational subdomain $U\subset X$ by rational subdomains $U_i\subset X$. Then Lemma~\ref{lemma:proof-standard-covering} allows us to assume that the covering is standard. Lemma~\ref{lemma:strongly-noeth-subspace} shows that $U=\Spa(B, B^+)$ is an affinoid with a strongly rigid-noetherian complete Huber ring $B$. So we may replace $X$ by $U$.\smallskip

Likewise, the \v{C}ech-to-derived spectral sequence and Lemma~\ref{lemma:proof-standard-covering} implies that it is sufficient to show that $\check{\rm{H}}^i(\cal{U}, \O_X)=0$ for any $i> 0$, rational $U$, and any standard covering $\cal{U}$ of $U$. Lemma~\ref{lemma:strongly-noeth-subspace} ensures that we can replace $X$ with $U$ to assume that $\cal{U}$ is a covering of $X$. \smallskip

Therefore, we reduced the original question to show that the augmented (alternating) $\check{C}$ech complex
\[
\check{C}_{aug}^\bullet(\cal{U}, \O_X)\coloneqq (A[1] \to \check{C}^\bullet(\cal{U}, \O_X))
\]
is exact with strict differentials for any standard covering $\cal{U}$ of $X$. \medskip

{\it Step 2. We show that the ``decompleted'' augmented $\check{C}$ech complex is exact}: Now suppose that the standard covering is given by elements $f_0, \dots, f_n \in A$ with $f_0A + \dots + f_n A =A$. Then we choose a  pair of definition $(A_0, I)$ with a topologically universally rigid-noetherian $A_0$. We consider the $A_0$-module $J\coloneqq f_0A_0 + f_1A_0 + \dots + f_nA_0$ inside $A$. We denote $S\coloneqq \Spec A_0$, $U\coloneqq \Spec A$, $P\coloneqq \mathrm{Proj}\oplus J^m\footnote{The notation $J^m$ means the $A_0$-submodule of $A$ generated by all $m$-fold products of elements in $J$. In particular, $J^0=A_0$.}$, and $P'\coloneqq \mathrm{Proj}\oplus (JA)^m$. Then we have a commutative square
\[
\begin{tikzcd}
P' \arrow{r}{p} \arrow{d}{s} & U \arrow{d}{j} \\
P \arrow{r}{g} & S.
\end{tikzcd}
\]
Clearly, $p$ is an isomorphism as $JA=A$, so $s$ induces a morphism $s\colon U \to P$ such that the diagram
\[
\begin{tikzcd}
& U \arrow[dl, swap, "s"] \arrow{d}{j} \\
P \arrow{r}{g} & S
\end{tikzcd}
\]
is commutative. We note that $s$ is an affine morphism as $j=g\circ s$ is affine and $g$ is separated. Therefore,  $\rm{R}^is_*\O_U$ vanish for $i>0$. This implies that $\rm{H}^i(P, s_*\O_U)=\rm{H}^i(U, \O_U)=0$ for $i>0$, and $\rm{H}^0(P, s_*\O_U)=A$. \smallskip

Now we compute the same cohomology groups in a different way using the \v{C}ech complex. We choose an affine covering $\cal{P}\coloneqq \{\rm{D}_+(f_i)\}$ of $P$. Since $s$ is quasi-compact and quasi-separated, the $\O_P$-module $s_*\O_U$ is quasi-coherent. So we can compute its cohomology via the \v{C}ech complex. Consider 
\[
C^\bullet\coloneqq \check{C}^\bullet(\cal{P}, s_*\O_U).
\]
The above computation of the cohomology groups $\rm{H}^i(P, s_*\O_U)$ implies that the augmented \v{C}ech complex
\[
C^\bullet_{aug}=\left( A[1] \to C^\bullet\right)
\]
is exact. For brevity, write $F=\{f_0, \dots, f_n\}$. Now we note that 
\begin{align*}
    C^i_{aug}&=\prod_{j_0< j_1< \dots< j_i} s_*\O_U\left(\rm{D}_+\left(f_{j_0}\right)\cap \dots \cap  \rm{D}_+\left(f_{j_i}\right)\right) \\
    &\simeq \prod_{j_0< j_1< \dots< j_i} \O_U\left(\rm{D}(f_{j_0})\cap \dots \cap  \rm{D}\left(f_{j_i}\right)\right) \\
    &\simeq \prod_{j_0< j_1< \dots< j_i} A\left[\frac{1}{f_{j_0}\dots f_{j_i}}\right] \\
    & \simeq \prod_{j_0< j_1< \dots< j_i} A\left(\frac{F}{f_{j_0}}; \frac{F}{f_{j_1}};\cdots ; \frac{F}{f_{j_i}}\right).
\end{align*}
We topologize it as in Definition~\ref{defn:rational-localization-may}. Therefore, we see that after completing it with respect to this topology, we get
\[
\wdh{C}^i_{aug}= \prod_{j_0< j_1< \dots< j_i} A\left\langle\frac{F}{f_{j_0}}; \cdots; \frac{F}{f_{j_i}}\right\rangle = \check{C}^i_{aug}(\cal{U}, \O_X). 
\]
In other words, we see that $C^i_{aug}$ is a ``decompletion'' of $\check{C}^i_{aug}(\cal{U}, \O_X)$. Now we invoke \cite[III.2.12, Lemma 2]{Bou} that says that $\check{C}^i_{aug}(\cal{U}, \O_X)\simeq \wdh{C}^\bullet_{aug}$ is exact with strict differentials if $C^\bullet_{aug}$ is so. We already know that it is exact, so we are left to show that the differentials of $C^\bullet_{aug}$ are strict. \medskip 

{\it Step 3. We reduce the claim to showing that the differentials $d^i\colon \check{C}^i_{aug}(\cal{P}, \O_P) \to \ker d^{i+1}$ are open}: 
We start by considering the natural morphism $\O_P \to s_*\O_U$. We compute this map on the affine opens $\rm{D}_+(f_{j_0}) \cap \dots \cap \rm{D}_+(f_{j_i}) = \rm{D}_+(f_{j_0}\dots f_{j_i})$. We note that, for $F=\{f_{0}, \dots, f_{n}\}$, 
\[
\rm{D}_+(f_{j_0}\dots f_{j_i})=\Spec \left(\oplus J^m\right)_{(f_{j_0}\dots f_{j_i})}\simeq \Spec A_0\left[\frac{f}{f_{j_k}} \ | \ k=0, \dots, i, f\in F\right]  \simeq \Spec A_0\left[\frac{F}{f_{j_0}}; \cdots; \frac{F}{f_{j_i}}\right]. 
\]
We topologize it using the $I$-adic topology. Then the map 
\[
    \O_P\left(\rm{D}_+\left(f_{j_0}\right) \cap \dots \cap \rm{D}_+\left(f_{j_i}\right)  \right) \to s_*\O_U\left(\rm{D}_+\left(f_{j_0}\right) \cap \dots \cap \rm{D}_+\left(f_{j_i}\right)\right)=\O_U\left(\rm{D}\left(f_{j_0}\right) \cap \dots \cap \rm{D}\left(f_{j_i}\right)\right)
\]
is naturally identified with
\[
A_0\left[\frac{F}{f_{j_0}}; \cdots; \frac{F}{f_{j_i}}\right] \to A\left(\frac{F}{f_{j_0}}; \cdots; \frac{F}{f_{j_i}}\right).
\]
In particular, it is an injective, continuous and {\it open} morphism since $A_0\left[\frac{F}{f_{j_0}}; \cdots; \frac{F}{f_{j_i}}\right]$ is a ring of definition in $A\left(\frac{F}{f_{j_0}}; \cdots; \frac{F}{f_{j_i}}\right)$. Therefore, for any $i\geq 0$, we conclude that
\[
\check{C}^i_{aug}(\cal{P}, \O_P) \to C^i_{aug}
\]
is injective and identifies $\check{C}^i_{aug}(\cal{P}, \O_P)$ with a ring of definition of $C^i_{aug}$. \smallskip

Now we deal with the case of $i=-1$, separately. We note that 
\[
    \check{C}^{-1}_{aug}(\cal{P}, \O_P) \simeq \rm{H}^0(P, \O_P) \subset \check{C}^0_{aug}(\cal{P}, \O_P)
\]
and 
\[
C^{-1}_{aug}\simeq \rm{H}^0(U, \O_U) \subset C^0_{aug}.
\]
Therefore, injectivity of $\check{C}^{-1}_{aug}(\cal{P}, \O_P) \to C^{-1}_{aug}$ follows from injectivity in degree $0$. So we only need to topologize $\check{C}^{-1}_{aug}(\cal{P}, \O_P)$ in a way that $\check{C}^{-1}_{aug}(\cal{P}, \O_P)$ is a ring of definition of $C^{-1}_{aug}=A$. \smallskip

We topologize it using the subspace topology from $\check{C}^0(\cal{P}, \O_P)$. This topology coincides with the natural $I$-topology (see Definition~\ref{defn:FP-approx-natural-topology}) by Remark~\ref{rmk:natural-topology-cech-complex}. Now Theorem~\ref{thm:FP-approx-topology-cohomology}\footnote{Note that $A_0$ is universally pseudo-adhesive by Theorem~\ref{thm:loc-Gabber}} ensures that this topology is the $I$-adic topology. So we need to show that 
\[
    B\coloneqq \rm{H}^0(P,\O_P)\subset A
\]
is an open subring and the subspace topology is the $I$-adic topology.\smallskip

Now we note that the morphism $g^{*}\O_S \to \O_P$ gives the morphism $A_0 \to B$ such that when composed with the inclusion $B \to A$ it is equal to the embedding $A_0 \to A$. This implies that $I^m \subset I^mB$, so it suffices to show that, for any $k$, there is an $m$ such that $I^mB \subset I^k$. \smallskip

Theorem~\ref{thm:FP-approx-proper} guarantees that $B$ is FP-approximated as an $A_0$-module, i.e. there is a finite $A_0$-submodule $M\subset B$ such that the module quotient is annihilated by $I^d$ for some $d$. Since $I$ is an ideal of definition in $A$, and $M$ is finitely generated, we can find $c$ such that $I^cM \subset I^k$. Therefore, $I^{c+d}B \subset I^cM \subset I^k$. This finishes the argument. \smallskip

Overall, we see that the $\check{C}^i_{aug}(\cal{P}, \O_P) \to C^i_{aug}$ is  injective and identifies $\check{C}^i_{aug}(\cal{P}, \O_P)$ with a ring of definition in $C^i_{aug}$ for every $i\geq -1$. \smallskip

Now, it suffices to show that the differentials $d^i_C \colon C^i_{aug} \to \ker d^{i+1}_C$ are open to conclude that $d^i_C\colon \check{C}^i_{aug} \to \check{C}^{i+1}_{aug}$ are strict for every $i\geq -1$. We claim that it is actually sufficient to show that the differentials $\delta^i \colon K^i \to \ker \delta^{i+1}$ is open, where $K^\bullet\coloneqq \check{C}^i_{aug}(\cal{P}, \O_P)$ and $\delta$ is the differential of this complex. \smallskip 

Grant this opennness. We just need to deduce that $d^i_C(I^mK^i)$ is open for any $m\geq 0$ as $\{I^mK^i\}$ for a fundamental system of neighborhoods of $0$ in $C^i_{aug}$ ($K^i$ is a ring of definition in $C^i_{aug}$). We know that 
\[
d^i_C(I^mK^i)=\delta^i(I^mK^i)
\]
is open in $\ker \delta^{i+1}=\ker d_C^{i+1}\cap K^{i+1}$. So as $K^{i+1}$ is open in $C_{aug}^{i+1}$, we conclude that $\ker \delta^{i+1}$ is open in $\ker d_C^{i+1}$. As a result, we get that $d^i_C(I^mK^i)$ is open in $C^{i+1}_{aug}$ for every $m\geq 0$, $i\geq -1$. \smallskip

{\it Step 4. We show that the differentials of $\delta^i \colon K^i \to \ker \delta^{i+1}$ are open}: The claim for $\delta^i$ is trivial if $i<-1$. If $i=-1$, the map $\delta^{-1}\colon K^{-1} \to \ker \delta^0$ is even a homeomorphism because 
\[
K^{-1}=\ker \delta^0
\]
and topology on $K^{-1}$ was defined to be the subspace topology. \smallskip

We consider the restriction $\delta^i \colon K^i \to \ker \delta^{i+1}$, where the target is endowed with the subspace topology. This map is open if and only if, for each $k$, there is $m$ such that 
\[
\ker \delta^{i+1} \cap I^m K^{i+1} \subset I^k \delta^i(K^i)=\delta^i(I^kK^i)
\]
Now we note that 
\[
I^m K^\bullet \simeq I^m \check{C}^\bullet_{aug}(\cal{P}, \O_P) \simeq  \check{C}^\bullet_{aug}(\cal{P}, I^m\O_P) =: (K_m^\bullet, \delta_m).
\]  
Then it suffices to show that, for any $k$, there is $m$ such that 
\[
\ker \delta^{i+1}\cap K_m^{i+1} \subset \delta^i_k(K_k^i) 
\]
that is equivalent to 
\[
\ker \delta_m^{i+1} \subset \delta^i_k(K_k^i)
\]
This means that we need to find $m$ such that 
\[
\rm{H}^{i+1}(K_m) \to \rm{H}^{i+1}(K_k)
\]
is zero. Unravelling the definitions, we get that this is equivalent to find $m$ such that
\[
\rm{H}^{i+1}(P, I^m\O_P) \to \rm{H}^{i+1}(P, I^k\O_P)
\]
is zero. \smallskip

Now we prove that claim under the assumption that $I^c\rm{H}^{i+1}(P, I^k\O_P)=0$ for some $c$ (depending on $k$ and $i\geq 0$) and then we show that this assumption always holds. We firstly observe that 
\[
\rm{Im}\left(H^{i+1}(P, I^m\O_P) \to H^{i+1}(P, I^k\O_P)\right)=\rm{F}^{m-k}\rm{H}^{i+1}(P, I^k\O_P),
\]
where $\rm{F}^\bullet$ stands for the natural $I$-filtration (see Definition~\ref{defn:FP-approx-natural-topology}). Now we note that $I^k\O_P$ is a finitely generated, quasi-coherent $\O_P$-module, so it is FP-approximated by Lemma~\ref{lemma:FP-approx-finite-type}. Therefore, Theorem~\ref{thm:FP-approx-topology-cohomology} ensures that the natural $I$-topology on $\rm{H}^{i+1}(P, I^k\O_P)$ is the $I$-adic topology. Then there is some $d$ such that
\[
\rm{F}^d\rm{H}^{i+1}\left(P, I^k\O_P\right) \subset I^c\rm{H}^{i+1}\left(P, I^k\O_P\right)=0.
\]
Claim~\ref{claim:killed-by-I^n} below ensures that $\rm{H}^{i+1}(P, I^k\O_P)$ is indeed annihilated by some $I^c$ for some $c$ depending on $k$ and $i\geq 0$.

\begin{claim1}\label{claim:iso-away-from-I} The morphism $g\colon P \to S$ is an isomorphism away from $\rm{V}(I)$.
\end{claim1}
\begin{proof}
It suffices to show that $g$ is isomorphism over $\rm{D}(f)$ for any $f\in I$. We note that 
\[
\left(\mathrm{Proj} \bigoplus J^m \right)\times_{\Spec A_0} \Spec (A_0)_f \simeq \mathrm{Proj} \bigoplus \left(J\left(A_0\right)_f\right)^m
\]
as $A_0 \to (A_0)_f$ is flat. Therefore, it suffices to show that $(A_0)_f\simeq A_f$ as then 
\[
J(A_0)_f=JA_f=(JA)A_f=A_f=(A_0)_f,
\] 
and so $g$ is an isomorphism over $(A_0)_f$. Now (the proof of) \cite[Lemma 3.7]{H0} implies that the natural map $(A_0)_f \to A_f$ is an isomorphism. 
\end{proof}

\begin{claim1}\label{claim:killed-by-I^n} For any $i, k\geq 0$, there is $c$ such that $I^c\rm{H}^{i+1}(P, I^k\O_P)=0$.
\end{claim1}
\begin{proof}
We note that $g$ is quasi-compact and separated, so \[
\rm{R}^{i+1}g_*\left(I^k\O_P\right) \simeq \widetilde{\rm{H}^{i+1}(P, I^k\O_P)}.
\]
Now Claim~\ref{claim:iso-away-from-I} says that $g$ is an isomorphism over $\Spec A_0 \setminus \rm{V}(I)$, so since $i\geq 0$ we have
\[
\rm{R}^{i+1}g_*\left(I^n\O_P\right)|_{\Spec A_0 \setminus \rm{V}(I)} \simeq 0.
\]
Since $I$ is finitely generated, this says 
\[
\rm{H}^{i+1}(P, I^k\O_P)=\rm{H}^{i+1}(P, I^k\O_P)[I^{\infty}]. 
\]
As $g$ is projective, Theorem~\ref{thm:FP-approx-proper} implies that $\rm{H}^{i+1}(P, I^k\O_P)$ is an FP-approximated $A_0$-module. Therefore, Lemma~\ref{lemma:FP-approx-bounded-torsion} ensures that for some $c\geq 0$ we have
\[
\rm{H}^{i+1}(P, I^k\O_P)=\rm{H}^{i+1}(P, I^k\O_P)[I^{\infty}]=\rm{H}^{i+1}(P, I^k\O_P)[I^c];
\]
i.e. $\rm{H}^{i+1}(P, I^k\O_P)$ is annihilated by $I^c$.
\end{proof}
\end{proof}

\newpage

\appendix

\section*{Appendix}

\section{FP-Approximated Sheaves}\label{section:FP}

This section is a summary of the results from \cite[Appendix C to Chapter I]{FujKato}. However, some of them were only announced in that Appendix, but no proof was given. Since these results are crucial for our proof of Theorem~\ref{thm:main}, we decided to provide the reader with the proofs in the generality we need in this paper. All main ideas are already present in \cite{FujKato} \smallskip

For the rest of the appendix, we fix a universally pseudo-adhesive pair $(R, I)$ (see Definition~\ref{defn:pseudo-adhesive}). In particular, $\Spec R$ noetherian outside $\rm{V}(I)$ and $I$ is finitely generated. \smallskip

We recall that an $R$-scheme is {\it universally $I$-adically pseudo-adhesive} (or simply {\it universally pseudo-adhesive}) if it has a covering by open affines $\Spec A_i$ such that each $A_i$ is $I$-adically universally pseudo-adhesive (see \cite[\textsection \ 0.8.6]{FujKato} for a more detailed discussion of this notion). Any finite type $R$-scheme is  universally pseudo-adhesive by Remark~\ref{rmk:adhesive-finite-type}. In particular, any  quasi-coherent $\O_X$-module of finite type $\F$ has bounded $I$-power torsion, i.e. $\F[I^{\infty}]=\F[I^n]$ for some $n$. \smallskip

Let us mention that the main reason to bring in the pseudo-adhesive assumption is to rescue noetherian techniques for non-noetherian situations with suitable finitely generated ideals. For instance, we will need to ensure that a submodule of a finite $A$-module has some precise finiteness property (Lemma~\ref{lemma:FP-approx-finite-type}) and its subspace topology coincides with the $I$-adic topology (Lemma~\ref{cor:FP-approx-subsheaf-top}).

\begin{defn}\label{defn:FP-approx} 
\begin{enumerate}
    \item A morphism of $\O_X$-modules $\varphi \colon \F \to \G$ is a {\it weak isomorphism} if $\rm{coker} \varphi$ and $\ker \varphi$ are annihilated by $I^n$ for some $n$.
    \item An {\it FP-approximation} of a quasi-coherent $\O_X$-module $\F$ is a weak isomorphism $\varphi \colon \G \to \F$ from a finitely presented $\O_X$-module $\G$. 
    \item An {\it FP-thickening} of a quasi-coherent $\O_X$-module $\F$ is a surjective FP-approximation $\varphi\colon \G \to \F$.
    \item A quasi-coherent $\O_X$-module is {\it FP-approximated} if there is an FP-approximation $\varphi \colon \G \to \F$. 
    \item An $R$-module $M$ is {\it FP-approximated} if $\widetilde{M}$ is an FP-approximated sheaf on $\Spec R$.
    \end{enumerate}
\end{defn}

\begin{lemma}\label{lemma:FP-approx-bounded-torsion} Let $M$ be an FP-approximated $R$-module. Then its $I^{\infty}$-torsion is bounded, i.e. $M[I^{\infty}]=M[I^n]$ for some $n\geq 0$. 
\end{lemma}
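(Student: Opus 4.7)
The plan is to use the universally pseudo-adhesive hypothesis on $(R,I)$ together with the definition of an FP-approximation to bound the torsion of $M$ by the torsion of a finite submodule together with the $I$-power that annihilates the cokernel.

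First I would fix an FP-approximation $\varphi \colon N \to M$, so that $N$ is a finitely presented $R$-module and there exists $n \geq 0$ with $I^n \cdot \ker\varphi = 0$ and $I^n \cdot \mathrm{coker}\,\varphi = 0$. Set $M' \coloneqq \mathrm{Im}(\varphi) \subseteq M$. The key numerical consequences are $I^n M \subseteq M'$ (from $I^n$ killing the cokernel) and that $M'$ is a quotient of the finitely generated module $N$, hence $M'$ is itself finitely generated over $R$.

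Next I would invoke the pseudo-adhesive property of $(R,I)$: every finite $R$-module has bounded $I$-power torsion, so there exists $m \geq 0$ with $M'[I^\infty] = M'[I^m]$. (This is exactly the clause of Definition~\ref{defn:pseudo-adhesive} we need; note that $(R,I)$ is pseudo-adhesive since it is universally pseudo-adhesive.) Now I would take an arbitrary $x \in M[I^\infty]$ and any $t \in I^n$. Then $tx \in I^n M \subseteq M'$, and $tx$ is killed by whatever power of $I$ kills $x$, so $tx \in M'[I^\infty] = M'[I^m]$. Therefore $I^m \cdot (tx) = 0$, and letting $t$ vary over generators of $I^n$ we get $I^{m+n} x = 0$.

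This shows $M[I^\infty] \subseteq M[I^{m+n}]$; the reverse inclusion is trivial, yielding $M[I^\infty] = M[I^{m+n}]$. I do not expect any serious obstacle: the single subtlety is noticing that $M'$ need not be finitely presented, only finitely generated, but the pseudo-adhesive hypothesis was formulated precisely so that bounded $I$-power torsion holds for all finite modules, which is what makes the argument go through.
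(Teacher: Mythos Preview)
Your argument is correct and follows essentially the same route as the paper: pass to the finitely generated image $M'=\varphi(N)$, use pseudo-adhesiveness to bound $M'[I^\infty]$, and then combine with $I^nM\subset M'$ to bound $M[I^\infty]$. The paper compresses this into the phrase ``so we may and do assume that $M$ is an $R$-finite module,'' which your write-up unpacks explicitly.
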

\begin{proof}
The definition of FP-approximated modules implies that there is a finite type $R$-submodule $N\subset M$ such that $M/N$ is killed by $I^m$ for some $m$. So we may and do assume that $M$ is an $R$-finite module. This case follows from the definition of pseudo-adhesive pairs. 
\end{proof}

\begin{lemma}\label{lemma:FP-approx-finite-type} Let $X$ be a finite type $R$-scheme. Then 
\begin{enumerate}
    \item\label{lemma:FP-approx-finite-type-1} any quasi-coherent $\O_X$-module of finite type admits an FP-thickening, 
    \item\label{lemma:FP-approx-finite-type-2} the category of FP-approximated sheaves is a Weak Serre abelian subcategory of the category of $\O_X$-modules,
    \item\label{lemma:FP-approx-finite-type-3} Any quasi-coherent sub or quotient sheaf of an FP-approximated $\F$ is FP-approximated.
\end{enumerate}
\end{lemma}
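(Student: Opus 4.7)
The plan is to prove (1) first, use it together with the pseudo-adhesive hypothesis to establish (3), and finally deduce (2) from (3) plus a direct extension-closure argument. On affines the engine is that for $(A,I)$ pseudo-adhesive with $I = (f_1,\ldots,f_r)$, each $A[1/f_i]$ is noetherian, so submodules of finite modules become finite after inverting any $f_i$; combined with the bounded $I$-power torsion axiom, this converts pointwise $I^\infty$-torsion statements into uniform $I^N$-torsion ones. The main obstacle will be the subsheaf part of (3), since it is the only step that genuinely uses both features of pseudo-adhesiveness in essential combination and whose globalization over qcqs $X$ requires careful extension of affine data.

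For (1), I would reduce to the affine case. On $\Spec A$ with a finite $A$-module $M$, pick a surjection $A^n \twoheadrightarrow M$ with kernel $K$. Lifting generators of each finite module $K[1/f_i]$ back to $K$ and summing produces a finite type $K' \subset K$ with $K'[1/f_i] = K[1/f_i]$ for all $i$, so $K/K'$ is pointwise $I^\infty$-torsion; since $K/K' \hookrightarrow A^n/K'$ and the latter is finite over a pseudo-adhesive ring, $K/K'$ is killed by a single $I^N$, giving the FP-thickening $A^n/K' \twoheadrightarrow M$. Globally one chooses a finite affine cover, applies the affine construction, and patches using that on a qcqs scheme any finite type quasi-coherent subsheaf of a quasi-coherent sheaf defined on an open extends to a finite type global subsheaf.

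For (3), the quotient case is immediate: composing an FP-approximation $\G \to \F$ with $\F \twoheadrightarrow \F''$, the image is a finite type subsheaf of $\F''$ whose cokernel is killed by $I^n$, so (1) applied to this image (then composed with the inclusion) yields an FP-approximation. The subsheaf case rests on the affine statement that every submodule $M' \subset N$ of a finite $A$-module is FP-approximated: lifting generators of each finite $M'[1/f_i]$ to a finite $M'_0 \subset M'$ makes $M'/M'_0 \subset N/M'_0$ pointwise $I^\infty$-torsion, hence uniformly annihilated by some $I^K$ by the bounded torsion axiom. Globally, given $\F' \subset \F$ with $\F$ FP-approximated, pick a finite type $\F_0 \subset \F$ with $\F/\F_0$ killed by $I^n$, apply the affine statement to $\F' \cap \F_0$ on each affine in a finite cover, extend the resulting finite type subsheaves to global ones, and sum to obtain a finite type $\F'_0 \subset \F' \cap \F_0$ with uniformly bounded quotient; then (1) finishes.

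For (2), kernels and cokernels are subsheaves and quotients of FP-approximated sheaves, so closure follows from (3). For an extension $0 \to \F' \to \F \to \F'' \to 0$ with $\F',\F''$ FP-approximated, I would first check that being FP-approximated is a local property for finite affine covers (again by extension-and-sum of finite type subsheaves) and reduce to the affine case: given finite $M'_0 \subset M'$ and $M''_0 \subset M''$ with quotients killed by $I^n$ and $I^m$ respectively, lift generators of $M''_0$ to $M$ and adjoin $M'_0$; a direct diagram chase shows the resulting finite $N \subset M$ satisfies $I^{n+m}(M/N)=0$, and (1) then produces the needed FP-approximation.
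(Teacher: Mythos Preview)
Your argument is correct and, in fact, more self-contained than the paper's. The paper simply cites \cite[Proposition I.C.2.2]{FujKato} for (\ref{lemma:FP-approx-finite-type-1}) and \cite[Theorem I.C.2.5]{FujKato} for (\ref{lemma:FP-approx-finite-type-2}), and then deduces (\ref{lemma:FP-approx-finite-type-3}) from these: for a quotient it pushes forward the finite-type approximating subsheaf and applies (\ref{lemma:FP-approx-finite-type-1}), and for a subsheaf $\F'\subset\F$ it observes that $\F/\F'$ is FP-approximated by the quotient case, and then invokes the kernel-closure in (\ref{lemma:FP-approx-finite-type-2}) to conclude that $\F'$ is FP-approximated. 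So the logical flow is $(\ref{lemma:FP-approx-finite-type-1}),(\ref{lemma:FP-approx-finite-type-2})\Rightarrow(\ref{lemma:FP-approx-finite-type-3})$.

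You reverse this, proving $(\ref{lemma:FP-approx-finite-type-1})\Rightarrow(\ref{lemma:FP-approx-finite-type-3})\Rightarrow(\ref{lemma:FP-approx-finite-type-2})$ by hand. The gain is that you never rely on the Fujiwara--Kato reference: your proof of (\ref{lemma:FP-approx-finite-type-1}) via lifting generators after inverting each $f_i$ and bounding the residual torsion is exactly the standard pseudo-adhesive trick, and your direct treatment of the subsheaf case in (\ref{lemma:FP-approx-finite-type-3}) avoids the circularity of needing (\ref{lemma:FP-approx-finite-type-2}) first. The cost is that you have to do the globalization bookkeeping (extension of finite-type subsheaves over a finite affine cover, then summing) in several places, whereas the paper offloads all of that to the reference. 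Both approaches are sound; yours would be preferable if the goal were to keep the appendix independent of \cite{FujKato}.
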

\begin{proof}
 Part~(\ref{lemma:FP-approx-finite-type-1}) is \cite[Proposition I.C.2.2]{FujKato}. Part~(\ref{lemma:FP-approx-finite-type-2}) is \cite[Theorem I.C.2.5]{FujKato}.  \smallskip
 
 We firstly prove Part~(\ref{lemma:FP-approx-finite-type-3}) for  quasi-coherent quotients of $\F$. The definition of FP-approximated sheaves easily implies that there is finite type quasi-coherent $\O_X$-submodule $\G \subset \F$ such that $\F/\G$ is annihilated by some $I^n$. Then if $\pi \colon \F \to \F'$ is a surjective map of quasi-coherent $\O_X$-modules, we define $\G' \coloneqq \pi(\G)$. Clearly, $\G'$ is a quasi-coherent $\O_X$-module of finite type, and $\F'/\G'$ is annihilated by $I^n$. Therefore, Part~(\ref{lemma:FP-approx-finite-type-1}) implies that $\F'$ is FP-approximated. \smallskip
 
 Now if $\F'$ is a quasi-coherent subsheaf of $\F$, it is clear that $\F''\coloneqq \F/\F'$ is quasi-coherent. So $\F''$ is FP-approximated by the discussion above. Thus, Part~(\ref{lemma:FP-approx-finite-type-2}) implies that $\F'$ is FP-approximated because FP-approximated sheaves are closed under kernels. 
\end{proof}

\begin{cor}\label{cor:FP-approx-preserved} Let $i\colon X \to Y$ be a closed immersion of finite type $R$-schemes, and let $\F$ be an FP-approximated $\O_X$-module. Then $i_*\F$ is an FP-approximated $\O_Y$-module.
\end{cor}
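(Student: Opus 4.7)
The plan is to transport an FP-approximation of $\F$ from $X$ to $Y$ via $i_{*}$, and then, since $i_{*}\G$ will in general only be of finite type on $Y$ rather than finitely presented, to replace it by a genuinely finitely presented $\O_Y$-module using Lemma~\ref{lemma:FP-approx-finite-type}(\ref{lemma:FP-approx-finite-type-1}). The subtle point the argument must address is that a closed immersion between finite-type $R$-schemes need not be of finite presentation, so the naive pushforward of a finitely presented approximation will not in general remain finitely presented on $Y$; this is the main (and essentially only) obstacle.

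First, fix an FP-approximation $\varphi \colon \G \to \F$ with $\G$ finitely presented on $X$. Because $i$ is a closed immersion, $i_{*}$ is exact on quasi-coherent sheaves, and so $\ker(i_{*}\varphi) \simeq i_{*}(\ker\varphi)$ and $\mathrm{coker}(i_{*}\varphi) \simeq i_{*}(\mathrm{coker}\,\varphi)$ are both annihilated by $I^{n}$ for the same $n$ that kills $\ker\varphi$ and $\mathrm{coker}\,\varphi$. Hence $i_{*}\varphi \colon i_{*}\G \to i_{*}\F$ is already a weak isomorphism of quasi-coherent $\O_Y$-modules. Moreover, $i_{*}\G$ is of finite type on $Y$, since a finitely generated $i_{*}\O_X$-module is a fortiori finitely generated as an $\O_Y$-module via the surjection $\O_Y \twoheadrightarrow i_{*}\O_X$.

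Next, Lemma~\ref{lemma:FP-approx-finite-type}(\ref{lemma:FP-approx-finite-type-1}), applied on the finite-type $R$-scheme $Y$ to the finite-type quasi-coherent sheaf $i_{*}\G$, produces an FP-thickening $\psi \colon \G' \twoheadrightarrow i_{*}\G$ with $\G'$ finitely presented and $\ker\psi$ annihilated by some $I^{m}$. Since $\psi$ is surjective, the composite $(i_{*}\varphi)\circ \psi \colon \G' \to i_{*}\F$ has cokernel equal to $\mathrm{coker}(i_{*}\varphi) \simeq i_{*}(\mathrm{coker}\,\varphi)$, killed by $I^{n}$, while a short diagram chase embeds $\ker((i_{*}\varphi)\circ\psi)/\ker\psi$ into $\ker(i_{*}\varphi)$, so the kernel is killed by $I^{m+n}$. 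Thus $(i_{*}\varphi)\circ\psi$ is a weak isomorphism from a finitely presented $\O_Y$-module to $i_{*}\F$, which by definition shows that $i_{*}\F$ is FP-approximated.
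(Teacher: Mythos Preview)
Your proof is correct and follows essentially the same route as the paper: reduce via exactness of $i_{*}$ to showing that $i_{*}\G$ is FP-approximated for $\G$ finitely presented, observe that $i_{*}\G$ is of finite type on $Y$, and then invoke Lemma~\ref{lemma:FP-approx-finite-type}(\ref{lemma:FP-approx-finite-type-1}). The only difference is cosmetic: the paper compresses your composition-of-weak-isomorphisms argument into the single sentence ``as $i_{*}$ is exact, it suffices to show that the corollary holds for finitely presented $\O_X$-modules,'' whereas you spell out explicitly why the composite $(i_{*}\varphi)\circ\psi$ remains a weak isomorphism.
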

\begin{proof}
As $i_*$ is exact, it suffices to show that the corollary holds for finitely presented $\O_X$-modules. So we may and do assume that $\F$ is finitely presented. Then $i_*\F$ is clearly a quasi-coherent $\O_X$-module of finite type. Therefore, it is FP-approximated by Lemma~\ref{lemma:FP-approx-finite-type}(\ref{lemma:FP-approx-finite-type-1}).
\end{proof}

Now we want to study cohomology groups of FP-approximated sheaves on projective $R$-schemes. We show that these cohomology are always FP-approximated $R$-modules, and a certain natural topology on these modules coincides with the $I$-adic topology. These results were announced in the proper case in \cite[Appendix C to Chapter I]{FujKato}. We do not discuss this generalization as the projective case is sufficient for our purposes. 

\begin{thm}\label{thm:FP-approx-proper} Let $X$ be a projective $R$-scheme, and let $\F$ be an FP-approximated $\O_X$-module. Then $\rm{H}^i(X, \F)$ is an FP-approximated $R$-module for any $i\geq 0$.
\end{thm}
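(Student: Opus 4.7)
The plan is to reduce to the case $X=\mathbb{P}^N_R$ and then to proceed by descending induction on $i$, closely mimicking the classical Serre d\'evissage. Since $X$ is projective there is a closed immersion $\iota\colon X\hookrightarrow \mathbb{P}^N_R$; by Corollary~\ref{cor:FP-approx-preserved} the pushforward $\iota_*\F$ is an FP-approximated $\O_{\mathbb{P}^N_R}$-module, and $\rm{H}^i(X,\F)\simeq \rm{H}^i(\mathbb{P}^N_R,\iota_*\F)$ since $\iota$ is a closed immersion; so we may and do assume $X=\mathbb{P}^N_R$.

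The proof rests on two simple observations. First, for every $n\in \Z$ and $i\geq 0$, the cohomology $\rm{H}^i(\mathbb{P}^N_R,\O(n))$ is a finite free $R$-module by the classical explicit \v{C}ech computation, which is valid over any base ring; in particular it is finitely presented and therefore FP-approximated. Second, any $R$-module $M$ killed by some power of $I$ is automatically FP-approximated: the zero map $0\to M$ is a weak isomorphism from the (vacuously finitely presented) zero module, since its kernel is $0$ and its cokernel $M$ is $I$-power torsion.

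I then argue by descending induction on $i$, the case $i>N$ being trivial. Assume $\rm{H}^{i+1}(\cal{G})$ is FP-approximated for every FP-approximated $\cal{G}$ on $\mathbb{P}^N_R$, and let $\F$ be FP-approximated. Taking $\G_0$ to be the image of a weak isomorphism $\G\to \F$ with $\G$ finitely presented, I obtain a finite type quasi-coherent subsheaf $\G_0\subset \F$ with $\F/\G_0$ killed by some $I^n$. By Serre's theorem on $\mathbb{P}^N_R$, valid over any ring for finite type quasi-coherent sheaves, for $m$ sufficiently large there is a surjection $\O(-m)^k\twoheadrightarrow \G_0$; its kernel $\cal{N}$ is a quasi-coherent subsheaf of the FP-approximated $\O(-m)^k$, hence FP-approximated by Lemma~\ref{lemma:FP-approx-finite-type}(\ref{lemma:FP-approx-finite-type-3}). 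The long exact sequence of $0\to \cal{N}\to \O(-m)^k\to \G_0\to 0$ then exhibits $\rm{H}^i(\G_0)$ as an extension of a subsheaf of the FP-approximated module $\rm{H}^{i+1}(\cal{N})$ (inductive hypothesis) by a quotient of the FP-approximated module $\rm{H}^i(\O(-m)^k)$ (first observation), so $\rm{H}^i(\G_0)$ is FP-approximated by the weak Serre property of Lemma~\ref{lemma:FP-approx-finite-type}(\ref{lemma:FP-approx-finite-type-2}). Finally, the long exact sequence of $0\to \G_0\to \F\to \F/\G_0\to 0$ sandwiches $\rm{H}^i(\F)$ between $\rm{H}^i(\G_0)$ (just shown FP-approximated) and $\rm{H}^i(\F/\G_0)$, which is FP-approximated by the second observation since it is $I^n$-torsion; hence $\rm{H}^i(\F)$ is FP-approximated and the induction closes.

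The main obstacle is bookkeeping rather than conceptual: one must be confident that the two projective-space tools invoked in the inductive step — global generation of $\G(m)$ for large $m$ when $\G$ is of finite type on $\mathbb{P}^N_R$, and the freeness and finiteness of $\rm{H}^i(\mathbb{P}^N_R,\O(n))$ — go through without any noetherian hypothesis on $R$. Both are in fact standard and survive unchanged in this generality, but the references should be chosen carefully. It is worth emphasising that the trivial FP-approximation $0\to M$ for $I^n$-killed $M$ is what dispatches what would otherwise be the central complication of the non-noetherian setting, reducing the whole problem to the familiar Serre dévissage.
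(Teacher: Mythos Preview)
Your proof is correct and follows essentially the same Serre d\'evissage as the paper: reduce to $\P^N_R$ via Corollary~\ref{cor:FP-approx-preserved}, then do descending induction on $i$ using a surjection from a sum of line bundles and the closure properties of Lemma~\ref{lemma:FP-approx-finite-type}. The paper's bookkeeping differs only cosmetically---it first reduces to $\F$ finitely presented (observing that a weak isomorphism of sheaves induces weak isomorphisms on all cohomology, which is the same content as your observation that $I$-torsion modules are trivially FP-approximated) and then exploits the vanishing $\rm{H}^k(\P^N_R,\O(r))=0$ for $r<0$, $0\le k<N$ to obtain an injection $\rm{H}^k(\F)\hookrightarrow\rm{H}^{k+1}(\F')$ rather than your general extension argument---but the substance is identical.
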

\begin{rmk} We do not impose the finite presentation assumption on $X$. The finite presentation version of Theorem~\ref{thm:FP-approx-proper} will be inadequate for the purpose of proving Theorem~\ref{thm:main}. 
\end{rmk}
\begin{proof}
 We firstly reduce to the case $X=\P_{R}^n$. Namely, there is a closed immersion $i \colon X \to \P^n_R$ as $X$ is projective. Since $i_*$ is exact, it suffices to show the claim for the sheaf $i_*\F$ that is FP-approximated by Corollary~\ref{cor:FP-approx-preserved}.  \smallskip
 
 Now we argue that $\rm{H}^i(\P^n_R, \F)$ is an FP-approximated $R$-module by descending induction on $i$. \smallskip
 
We claim that $\rm{H}^i(\P^n_R, \F)=0$ if $i>n$. Indeed, $\P^n_R$ admits the standard affine covering $\cal{U}=\{U_i\}$ by $n+1$ opens. So the cohomology groups of any quasi-coherent sheaf can be computed by the {\it alternating} Cech complex with respect to that covering. Thus, $\rm{H}^i(\P^n_R, \F)=0$ for any $i>n$.\smallskip

Now we do the induction step. Suppose we know the claim for all FP-approximated sheaves $\F$ and all $i>k$, we conclude the statement for $i=k$. By definition, we can find a weak isomorphism $\G \to \F$ with a finitely presented $\O_X$-module $\G$. It is clear that morphisms $\rm{H}^i(\P^n_R, \G) \to \rm{H}^i(\P^n_R, \F)$ are weak isomorphisms for any $i$. Thus it suffices to prove the claim for a finitely presented $\O_X$-module $\F$. \smallskip

We invoke the ample line bundle $\O_{\P^n_R}(1)$ to say that there is always a short exact sequence
\[
0 \to \F' \to \O_{\P^n_R}(r)^m \to \F \to 0
\]
for some {\it negative} $r$. Lemma~\ref{lemma:FP-approx-finite-type}(\ref{lemma:FP-approx-finite-type-2}) implies that $\F'$ is FP-approximated, so $\rm{H}^i(\P^n_R, \F')$ are FP-approximated for any $i>k$ by the induction assumption. \smallskip

Firstly we consider the case $k=n$. Then we know that $\rm{H}^{n+1}(\P^n_R, \F')=0$ by the discussion above. So the natural morphism $\rm{H}^k(\P^n_R, \O_{\P^n_R}(r))^m \to \rm{H}^k(\P^n_R, \F)$ is surjective. This implies that $\rm{H}^k(\P^n_R, \F)$ is a finite $R$-module by Serre's computation. Therefore, it is FP-approximated by Lemma~\ref{lemma:FP-approx-finite-type}(\ref{lemma:FP-approx-finite-type-1}).  \smallskip

Now suppose that $k<n$. Then we know that $\rm{H}^k(\P^n_R, \O_{\P^n_R}(r))^m=0$ by Serre's computation\footnote{We use here that $r<0$.}. Therefore, we conclude that the natural map $\rm{H}^k(\P^n_R, \F) \to \rm{H}^{k+1}(\P^n_R, \F')$ is injective. Thus, $\rm{H}^k(\P^n_R, \F)$ is FP-approximated by Lemma~\ref{lemma:FP-approx-finite-type}(\ref{lemma:FP-approx-finite-type-3}) and the induction assumption.
\end{proof}

Now we try to understand a topology on $\rm{H}^i(X, \F)$. 

\begin{defn}\label{defn:FP-approx-natural-topology}
The {\it natural $I$-filtration} $\rm{F}^\bullet \rm{H}^i(X, \F)$ is
\[
\rm{F}^n \rm{H}^i(X, \F) \coloneqq \rm{Im}\left(\rm{H}^i(X, I^n\F) \to \rm{H}^i(X, \F) \right)
\]

The {\it natural $I$-topology} on $\rm{H}^i(X, \F)$ is the topology induced by the filtration $\rm{F}^\bullet \rm{H}^i(X, \F)$. 
\end{defn}

\begin{rmk}\label{rmk:natural-topology-cech-complex} Suppose $X$ is a separated quasi-compact $R$-scheme, $\F$ a quasi-coherent $\O_X$-module, and $\cal{U}=\{U_1, \dots, U_n\}$ an open affine covering of $X$. Then the natural $I$-topology on $\rm{H}^i(X, \F)$ coincides with the subquotient topology on $\rm{H}^i(X, \F)\simeq \check{\rm{H}}^i(\cal{U}, \F)$ induced from the $I$-adic topology on the (alternating) \v{C}ech complex $\check{C}^i(\cal{U}, \F)$.
\end{rmk}

Clearly $I^n \rm{H}^i(X, \F) \subset \rm{F}^n \rm{H}^i(X, \F)$ for any $n$. These two filtrations on $\rm{H}^i(X, \F)$ are usually different, but we claim that the induced topologies are the same for any FP-approximated sheaf $\F$ on a projective $R$-scheme $X$. \smallskip

Before proving this claim, we need the following lemma:

\begin{lemma}\label{lemma:fp-approx-subtopolgy} Let $M$ be an FP-approximated $R$-module, and $N\subset M$ be any submodule. The the $I$-adic topology on $M$ restricts to the $I$-adic topology on $N$.
\end{lemma}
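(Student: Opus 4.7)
\medskip
\noindent\textit{Proof plan.} The reverse inclusion $I^n N \subset N \cap I^n M$ is immediate, so the task is to show that for every $k$ there exists $n$ with $N \cap I^n M \subset I^k N$. My strategy is to FP-approximate both $M$ and $N$ in order to reduce to a finitely generated situation, and then apply Artin--Rees for pseudo-adhesive pairs.

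First, by Lemma~\ref{lemma:FP-approx-finite-type}(\ref{lemma:FP-approx-finite-type-3}) applied to $X = \Spec R$, the submodule $N$ is itself FP-approximated. I then choose finitely generated $R$-submodules $N_0 \subset N$ and $M_0 \subset M$, with $N_0 \subset M_0$, such that $N/N_0$ and $M/M_0$ are annihilated by a common $I^a$. The mutual inclusions $I^{n+a}M \subset I^n M_0 \subset I^n M$ and $I^{n+a}N \subset I^n N_0 \subset I^n N$ show that $\{N \cap I^n M\}$ and $\{N \cap I^n M_0\}$ define the same topology on $N$, and likewise for $\{I^n N\}$ and $\{I^n N_0\}$. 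So the problem reduces to producing, for each $k$, some $n$ with $N \cap I^n M_0 \subset I^k N_0$.

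Next, I plan to apply Artin--Rees for the pseudo-adhesive pair $(R, I)$ to the finitely generated submodule $N_0 \subset M_0$, which yields a constant $c$ with $N_0 \cap I^n M_0 \subset I^{n-c} N_0$ for $n \geq c$. The remaining task is to bridge from $N_0 \cap I^n M_0$ to $N \cap I^n M_0$: given $x \in N \cap I^n M_0$, its image $\bar x$ in $M_0/N_0$ lies in the $I^a$-torsion submodule $T \coloneqq (M_0/N_0)[I^a]$ (since $I^a N \subset N_0$) as well as in $I^n(M_0/N_0)$. If I can show $T \cap I^n(M_0/N_0) = 0$ for all sufficiently large $n$, then $x$ must lie in $N_0$, and Artin--Rees for $N_0 \subset M_0$ completes the proof.

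The hard part will be this final vanishing. In the non-noetherian pseudo-adhesive setting, $T$ need not be finitely generated---only FP-approximated, again by Lemma~\ref{lemma:FP-approx-finite-type}(\ref{lemma:FP-approx-finite-type-3})---so classical Artin--Rees does not directly apply to $T$. My plan is to pick a finitely generated subapproximation $T_0 \subset T$ and use Artin--Rees together with $I^a T_0 = 0$ to conclude $T_0 \cap I^n(M_0/N_0) = 0$ for large $n$, and then propagate the vanishing from $T_0$ to $T$ using the bounded $I^\infty$-torsion of pseudo-adhesive pairs (Lemma~\ref{lemma:FP-approx-bounded-torsion}) together with the FP-approximation machinery of Section~\ref{section:FP}. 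This propagation is the technical heart of the argument and is where the non-noetherian pseudo-adhesive features are genuinely used.
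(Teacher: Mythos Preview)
Your approach is significantly more complicated than the paper's, and the complication stems from a mistaken assumption about the available Artin--Rees input. The paper's proof is essentially two lines once one knows the finite-$M$ case: choose a finite $M'\subset M$ with $I^c(M/M')=0$, set $N'\coloneqq N\cap M'$, and apply \cite[Proposition~0.8.5.6]{FujKato} to the inclusion $N'\subset M'$ to get $p$ with $I^pM'\cap N'\subset I^nN'$; then $I^{c+p}M\cap N\subset I^pM'\cap N=I^pM'\cap N'\subset I^nN'\subset I^nN$. The crucial point is that the pseudo-adhesive Artin--Rees result in \cite{FujKato} already applies to an \emph{arbitrary} submodule of a finitely generated module --- it does not require $N'$ to be finitely generated. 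Your introduction of a finite $N_0\subset N$ is therefore unnecessary, and it is precisely this choice that creates your ``hard part''.

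Moreover, the hard part you isolate is circular. You want $T\cap I^n(M_0/N_0)=0$ for large $n$, where $T=(M_0/N_0)[I^a]$ and $M_0/N_0$ is finitely generated. Since $I^aT=0$, this is exactly the statement that the subspace topology on $T\subset M_0/N_0$ is the $I$-adic one --- i.e.\ the finite-$M$ case of the very lemma you are proving. If you are willing to invoke that case (from \cite{FujKato}), you should invoke it on $N\cap M_0\subset M_0$ directly and skip the detour; if you are not, then your ``propagation'' from $T_0$ to $T$ does not work as sketched. Concretely, from $I^bT\subset T_0$ and $T_0\cap I^nQ=0$ you only get $I^b x=0$ for $x\in T\cap I^nQ$, which you already knew; there is no mechanism in your outline that forces $x=0$. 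The bounded-torsion and FP-approximation facts you cite do not close this gap without reproving the pseudo-adhesive Artin--Rees for non-finite submodules.
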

\begin{proof}
If $M$ is a finitely generated this is proven in \cite[Proposition 0.8.5.6]{FujKato}. \smallskip

Now we deal with the case of any FP-approximated $R$-module $M$. Clearly, $I^nN \subset I^nM\cap N$ for any $n$. So it suffices to show that, for any $n$, there is $m$ such that $I^mM \cap N \subset I^n N$. \smallskip

We can find a finite $R$-submodule $M' \subset M$ such that $M/M'$ is annihilated by $I^c$. Then we know that the $I$-adic topology on $M'$ restricts to the $I$-adic topology on $N'\coloneqq M' \cap N$ by the case of finite $R$-modules. This means that there is an integer $p$ such that $I^pM' \cap N \subset I^n N'$. Then 
\[
I^{c+p} M \cap N \subset I^pM' \cap N \subset I^nN' \subset I^n N.
\]
So $m=c+p$ does the job.
\end{proof}

\begin{cor}\label{cor:FP-approx-subsheaf-top} Let $X$ be a finite type $R$-scheme, $\F$ an FP-approximated sheaf, $\G \subset \F$ be a quasi-coherent $\O_X$-submodule of $\F$. Then, for any $n$, there is $m$ such that $I^m\F\cap \G\subset I^n\G$. 
\end{cor}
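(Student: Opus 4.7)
The plan is to reduce the global statement to the affine case and then apply Lemma~\ref{lemma:fp-approx-subtopolgy}. First I would use that $X$ is quasi-compact (being of finite type over $\Spec R$) to pick a finite affine cover $X = \bigcup_{j=1}^{k} U_j$ with $U_j = \Spec A_j$, and write $\F|_{U_j} = \widetilde{M_j}$ and $\G|_{U_j} = \widetilde{N_j}$ with $N_j \subset M_j$. I would then check that each $M_j$ is an FP-approximated $A_j$-module: given any FP-approximation $\varphi \colon \F' \to \F$ on $X$, the restriction $\varphi|_{U_j}$ is a weak isomorphism from the finitely presented $\O_{U_j}$-module $\F'|_{U_j}$ to $\F|_{U_j}$ (finite presentation restricts, and the $I^n$-annihilation of $\ker\varphi$ and $\mathrm{coker}\,\varphi$ restricts as well). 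Since $A_j$ is a finite type $R$-algebra, Remark~\ref{rmk:adhesive-finite-type} makes $(A_j, IA_j)$ a universally pseudo-adhesive pair, so Lemma~\ref{lemma:fp-approx-subtopolgy} is available over $(A_j, IA_j)$.

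Now fix $n$. Applying Lemma~\ref{lemma:fp-approx-subtopolgy} to each inclusion $N_j \subset M_j$ produces integers $m_j$ with $I^{m_j} M_j \cap N_j \subset I^n N_j$ inside $M_j$. Because the cover is finite, I can set $m \coloneqq \max_j m_j$ and obtain the inclusion $I^m M_j \cap N_j \subset I^n N_j$ on every $U_j$ simultaneously. Passing back to sheaves is then formal: one has $(I^m \F)|_{U_j} = \widetilde{I^m M_j}$, $(I^n \G)|_{U_j} = \widetilde{I^n N_j}$, and intersection of quasi-coherent subsheaves of $\F$ commutes with restriction to affines, so $(I^m \F \cap \G)|_{U_j} = \widetilde{I^m M_j \cap N_j}$. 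Hence the desired inclusion $I^m \F \cap \G \subset I^n \G$ can be verified on the cover $\{U_j\}$, where it is exactly the module-theoretic inclusions produced above.

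The substantive content of the corollary is entirely absorbed by Lemma~\ref{lemma:fp-approx-subtopolgy}; the argument here is a genuine globalization and I do not expect any serious obstacle. The two points to watch are (i) that Lemma~\ref{lemma:fp-approx-subtopolgy} is applied with $(A_j, IA_j)$ in place of $(R, I)$, which is justified by Remark~\ref{rmk:adhesive-finite-type}, and (ii) that the finiteness of the affine cover is needed to extract a uniform $m$, which is why the finite type (hence quasi-compactness) hypothesis on $X$ enters essentially.
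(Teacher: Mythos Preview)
Your proposal is correct and follows the same approach as the paper, which simply states that it suffices to assume $X$ is affine and then invokes Lemma~\ref{lemma:fp-approx-subtopolgy}. You have spelled out the reduction in more detail (finite affine cover, restriction of FP-approximations, uniform $m = \max_j m_j$), which is exactly the content implicit in the paper's one-line proof.
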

\begin{proof}
It suffices to assume that $X$ is affine, in which case it follows from Lemma~\ref{lemma:fp-approx-subtopolgy}. 
\end{proof}

\begin{cor}\label{cor:different-top} Let $X$ be a finite type $R$-scheme, $\G$ an FP-approximated sheaf, and $\varphi\colon \G \to \F$ a weak isomorphism of quasi-coherent $\O_X$-modules. Then, for every $i\geq 0$, the natural $I$-topology on $\rm{H}^i(X, \F)$ coincides with the topology induced by the filtration \[\rm{Fil}_\G^n\rm{H}^i(X, \F)=\rm{Im}(\rm{H}^i(X, I^n\G) \to \rm{H}^i(X, \F)).\]
\end{cor}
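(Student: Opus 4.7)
The plan is to prove that the natural $I$-filtration $\rm{F}^n \coloneqq \rm{Im}\bigl(\rm{H}^i(X, I^n\F) \to \rm{H}^i(X, \F)\bigr)$ and the filtration $\rm{Fil}_\G^n$ define the same topology on $\rm{H}^i(X, \F)$. One inclusion, $\rm{Fil}_\G^n \subset \rm{F}^n$ for all $n$, is immediate, since the map $I^n\G \to \F$ factors through $I^n\F$. So it suffices to prove that for every $n \geq 0$ there is some $m \geq 0$ with $\rm{F}^m \subset \rm{Fil}_\G^n$. Fix $N \geq 0$ such that $I^N$ kills both $\ker\varphi$ and $\rm{coker}\,\varphi$, and set $\G' \coloneqq \rm{Im}\,\varphi$.

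The first observation is that $\F$ is itself FP-approximated. From the short exact sequence $0 \to \G' \to \F \to \rm{coker}\,\varphi \to 0$: the subsheaf $\G' \cong \G/\ker\varphi$ is a quasi-coherent quotient of the FP-approximated sheaf $\G$, hence is FP-approximated by Lemma~\ref{lemma:FP-approx-finite-type}(\ref{lemma:FP-approx-finite-type-3}); and $\rm{coker}\,\varphi$ is FP-approximated since the zero map from the (trivially) finitely presented zero sheaf to $\rm{coker}\,\varphi$ is a weak isomorphism. Lemma~\ref{lemma:FP-approx-finite-type}(\ref{lemma:FP-approx-finite-type-2}) then forces $\F$ to be FP-approximated.

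The core of the argument is two applications of Corollary~\ref{cor:FP-approx-subsheaf-top}. First, applied to the quasi-coherent subsheaf $\ker\varphi$ of the FP-approximated sheaf $\G$, with target exponent $N$ (so that $I^N\ker\varphi = 0$), it yields an integer $m_0$ with $I^{m_0}\G \cap \ker\varphi = 0$. Consequently, for every $m \geq m_0$, the map $\varphi|_{I^m\G}$ is injective and surjects onto $I^m\G'$, so $\varphi$ restricts to an isomorphism $I^m\G \xrightarrow{\sim} I^m\G'$. Applying $\rm{H}^i(X, -)$ gives $\rm{Fil}_\G^m = \rm{Im}\bigl(\rm{H}^i(X, I^m\G') \to \rm{H}^i(X, \F)\bigr)$ for $m \geq m_0$.

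Second, applied to the submodule $\G' \subset \F$ (using FP-approximation of $\F$), it produces, for any $n \geq 0$, some $m$ with $I^m\F \cap \G' \subset I^n\G'$. Since $\F/\G' = \rm{coker}\,\varphi$ is killed by $I^N$, for $m \geq N$ we have $I^m\F \subset \G'$, whence $I^m\F = I^m\F \cap \G' \subset I^n\G'$. Applying $\rm{H}^i(X, -)$ yields $\rm{F}^m \subset \rm{Im}\bigl(\rm{H}^i(X, I^n\G') \to \rm{H}^i(X, \F)\bigr)$. Combining the two steps, for $n \geq m_0$ we obtain $\rm{F}^m \subset \rm{Fil}_\G^n$; for $n < m_0$, apply the argument with target $m_0$ in place of $n$ and use the monotonicity $\rm{Fil}_\G^{m_0} \subset \rm{Fil}_\G^n$. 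The main subtlety is promoting the weak isomorphism $\varphi$ to an actual isomorphism after truncation by $I^m$; without this, one would only obtain the weaker comparison $I^c \rm{F}^m \subset \rm{Fil}_\G^m$ for a fixed $c$, which is insufficient to identify the two topologies.
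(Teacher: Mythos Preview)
Your proof is correct and follows essentially the same approach as the paper's: both arguments use two applications of Corollary~\ref{cor:FP-approx-subsheaf-top}, one to kill the kernel (so that $I^m\G \xrightarrow{\sim} I^m\G'$ for large $m$) and one to push $I^m\F$ into $I^n\G'$ via the boundedness of the cokernel. Your write-up is in fact slightly more careful than the paper's, since you explicitly verify that $\F$ is FP-approximated before invoking Corollary~\ref{cor:FP-approx-subsheaf-top} for $\G' \subset \F$, whereas the paper uses this implicitly.
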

\begin{proof}
Consider the short exact sequences
\[
0 \to \cal{K} \to \G \to \cal{H} \to 0,
\]
\[
0 \to \cal{H} \to \F \to \cal{Q} \to 0,
\]
where $\cal{K}$ and $\cal{Q}$ are annihilated by $I^n$ for some $n$. The first short exact sequence induced the short exact sequence
\[
0 \to \cal{K}\cap I^m\G \to I^m\G \to I^m\cal{H} \to 0
\]
for any $m\geq 0$. Corollary~\ref{cor:FP-approx-subsheaf-top} implies that $\cal{K}\cap I^m\G \subset I^n\cal{K}=0$ for large enough $m$. Therefore, the natural map $I^m\G \to I^m\cal{H}$ is an isomorphism for large enough $m$. So we can replace $\G$ with $\cal{H}$ to assume that $\varphi$ is injective (since $\cal{H}$ is FP-approximated by Lemma~\ref{lemma:FP-approx-finite-type}(\ref{lemma:FP-approx-finite-type-3})). \smallskip

Now clearly $\rm{Fil}_\G^k\rm{H}^i(X, \F) \subset \rm{F}^k\rm{H}^i(X, \F)$ for every $k$. So it suffices to show that, for any $k$, there $m$ such that $\rm{F}^m\rm{H}^i(X, \F) \subset \rm{Fil}_\G^k\rm{H}^i(X, \F)$. We consider the short exact sequence
\[
0 \to \G \cap I^m\F \to I^m\F \to I^m\cal{Q} \to 0.
\]
If $m\geq n$ we get that $\G \cap I^m \F = I^m\F$ because $I^m\cal{Q}\simeq 0$. Now we use Corollary~\ref{cor:FP-approx-subsheaf-top} to conclude there is $m\geq n$ such that 
\[
I^m\F = \G\cap I^m\F \subset I^k\G
\]
Therefore, $\rm{F}^m\rm{H}^i(X, \F) \subset \rm{Fil}_\G^k\rm{H}^i(X, \F)$. 
\end{proof}

\begin{lemma}\label{lemma:FP-approx-enough} Let $X$ be a finite type $R$-scheme, $\G$ an FP-approximated sheaf, and $\G \to \F$ a weak isomorphism of quasi-coherent $\O_X$-modules. Suppose that the natural $I$-topology on $\rm{H}^i(X, \G)$ is the $I$-adic topology. Then the same holds for $\rm{H}^i(X, \F)$. 
\end{lemma}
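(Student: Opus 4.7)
The strategy is to apply Corollary~\ref{cor:different-top} to replace the natural $I$-topology on $\rm{H}^i(X,\F)$ by the topology coming from the shifted filtration $\rm{Fil}_\G^\bullet$, and then transport the hypothesis ``natural $I$-topology $=$ $I$-adic topology'' across the weak isomorphism $\varphi\colon \G \to \F$. Write $M \coloneqq \rm{H}^i(X,\G)$, $N \coloneqq \rm{H}^i(X,\F)$, and let $\varphi_*\colon M \to N$ denote the induced map. By functoriality, $\rm{H}^i(X, I^n\G) \to N$ factors through $M$, so
\[
\rm{Fil}_\G^n N \;=\; \varphi_*\!\left(\rm{F}^n M\right),
\]
and by Corollary~\ref{cor:different-top} the filtration $\rm{Fil}_\G^\bullet N$ induces the natural $I$-topology on $N$. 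So it suffices to show that $\rm{Fil}_\G^\bullet N$ is topologically equivalent to the $I$-adic filtration $I^\bullet N$.

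My first step is to observe that $\varphi_*$ is itself a weak isomorphism of $R$-modules. Split $\varphi$ into the short exact sequences $0 \to \ker\varphi \to \G \to \rm{Im}\varphi \to 0$ and $0 \to \rm{Im}\varphi \to \F \to \rm{coker}\varphi \to 0$ and take the long exact sequences in cohomology. Since $\ker\varphi$ and $\rm{coker}\varphi$ are annihilated by some $I^c$, so are all of their cohomology modules, and chasing the sequences shows that $\ker(\varphi_*)$ and $\rm{coker}(\varphi_*)$ are annihilated by $I^a$ for some $a$. In particular, $I^a N \subseteq \varphi_*(M)$.

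Given this, the comparison of filtrations is routine. For one direction, given $n$, the hypothesis on $M$ yields some $m$ with $\rm{F}^m M \subseteq I^n M$, and then
\[
\rm{Fil}_\G^m N = \varphi_*(\rm{F}^m M) \subseteq \varphi_*(I^n M) = I^n\,\varphi_*(M) \subseteq I^n N,
\]
so the $\rm{Fil}_\G$-topology is finer than the $I$-adic topology. For the reverse, combining the trivial $I^n M \subseteq \rm{F}^n M$ with $I^a N \subseteq \varphi_*(M)$ gives
\[
I^{n+a} N = I^n(I^a N) \subseteq I^n\,\varphi_*(M) = \varphi_*(I^n M) \subseteq \varphi_*(\rm{F}^n M) = \rm{Fil}_\G^n N.
\]
Combined with the first paragraph, this finishes the proof.

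No serious obstacle remains: the genuine content of comparing the natural $I$-topology with the $\rm{Fil}_\G$-topology has already been absorbed into Corollary~\ref{cor:different-top}, and the remaining step is the elementary observation that any $R$-module map whose kernel and cokernel are annihilated by $I^a$ transfers the equivalence ``natural filtration $=$ $I$-adic filtration'' from source to target, at the cost of shifting indices by $a$.
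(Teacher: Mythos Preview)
Your proof is correct and follows essentially the same route as the paper: both invoke Corollary~\ref{cor:different-top} to replace the natural $I$-filtration on $N=\rm{H}^i(X,\F)$ by the filtration $\rm{Fil}_\G^\bullet N=\varphi_*(\rm{F}^\bullet M)$, and then push the hypothesis $\rm{F}^k M\subset I^n M$ through $\varphi_*$ to get $\rm{Fil}_\G^k N\subset I^n N$. The only minor difference is that the paper works directly with $\rm{F}^\bullet N$ (for which $I^n N\subset\rm{F}^n N$ is tautological) and uses the corollary only for the nontrivial inclusion, so it never needs the fact that $\varphi_*$ is a weak isomorphism; your version reorganizes the logic around $\rm{Fil}_\G^\bullet$ and therefore spends a few lines on the (harmless) reverse inclusion $I^{n+a}N\subset\rm{Fil}_\G^n N$.
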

\begin{proof}
Clearly, $I^n\rm{H}^i(X, \F) \subset \rm{F}^n\rm{H}^i(X, \F)$. So it suffices to show that, for every $n$, there is an $m$ such that $\rm{F}^m \rm{H}^i(X, \F) \subset I^n\rm{H}^i(X, \F)$. \smallskip

The assumption that the natural $I$-topology on $\rm{H}^i(X, \G)$ coincides with the $I$-adic topology guarantees that $\rm{F}^k\rm{H}^i(X, \G) \subset I^n\rm{H}^i(X, \G)$ for large enough $k$. Pick such $k$. Corollary~\ref{cor:different-top} implies that 
\[
\rm{F}^m\rm{H}^i(X, \F) \subset \rm{Im}(\rm{H}^i(X, I^k\G) \to \rm{H}^i(X, \F))
\]
for large enough $m$. So we get, for such $m$,  that 
\[
\rm{F}^m\rm{H}^i(X, \F) \subset \rm{Im}\left(\rm{H}^i(X, I^k\G) \to \rm{H}^i\left(X, \F\right)\right) \subset \rm{Im}\left(I^n\rm{H}^i\left(X, \G\right) \to \rm{H}^i\left(X, \F\right)\right) \subset I^n\rm{H}^i\left(X, \F\right)
\]
for a large enough $m$.
\end{proof}

\begin{thm}\label{thm:FP-approx-topology-cohomology} Let $X$ be a projective $R$-scheme, and $\F$ be an FP-approximated $\O_X$-module. Then the natural $I$-topology on $\rm{H}^i(X, \F)$ coincides with the $I$-adic topology for any $i$.
\end{thm}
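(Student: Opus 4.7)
The plan is to argue by descending induction on the cohomological degree $i$ after two structural reductions. First, using Corollary~\ref{cor:FP-approx-preserved}, a closed immersion $\iota \colon X \hookrightarrow \P^n_R$ (which exists because $X$ is projective) lets us reduce to the case $X = \P^n_R$: $\iota_*\F$ remains FP-approximated, and $\iota_*(I^m\F) = I^m \iota_*\F$ because $\iota_*$ is exact and $I$ is pulled back from $R$, so the natural $I$-filtration on cohomology is preserved under pushforward. Second, within each step of the induction, I would use Lemma~\ref{lemma:FP-approx-enough} to reduce from an arbitrary FP-approximated $\F$ to the case where $\F$ is finitely presented. The base case $i > n$ is immediate, since the standard affine cover of $\P^n_R$ has $n+1$ elements, so $\rm{H}^i(\P^n_R, -) = 0$ and both topologies are trivially discrete.

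For the induction step at degree $i \leq n$, pick a short exact sequence $0 \to \F' \to \O(r)^m \to \F \to 0$ with $r$ sufficiently negative that $\rm{H}^j(\O(r)) = 0$ for $0 \leq j < n$ (Serre vanishing); note that $\F'$ is FP-approximated by Lemma~\ref{lemma:FP-approx-finite-type}(\ref{lemma:FP-approx-finite-type-3}) as a quasi-coherent subsheaf of a finitely presented sheaf. Set $T \coloneqq \rm{H}^n(\P^n_R, \O(r)^m)$, which is finite free over $R$ by the explicit Čech formula. The short exact sequence $0 \to I^s\O(r) \to \O(r) \to (R/I^s)\O(r) \to 0$, combined with the fact that the Čech complex of $\O(r)$ consists of finite free $R$-modules, identifies the image of $\rm{H}^n(I^s\O(r)^m) \to T$ with $I^sT$ and yields the vanishings $\rm{H}^j(I^s\O(r)^m) = 0$ for $0 \leq j < n$. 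The case $i = n$ follows from a diagram chase: since $\P^n_R$ has cohomological dimension $n$ we get $\rm{H}^{n+1}(\F') = 0 = \rm{H}^{n+1}(I^s\F')$, hence surjections $T \twoheadrightarrow \rm{H}^n(\F)$ and $\rm{H}^n(I^s\O(r)^m) \twoheadrightarrow \rm{H}^n(I^s\F)$; chasing the commutative square shows that $\rm{F}^s\rm{H}^n(\F)$ equals the image of $I^sT$ in $\rm{H}^n(\F)$, which is $I^s\rm{H}^n(\F)$ on the nose.

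For $0 \leq i < n$, the vanishings above give a commutative square of injective connecting homomorphisms $\delta \colon \rm{H}^i(\F) \hookrightarrow \rm{H}^{i+1}(\F')$ and $\delta_s \colon \rm{H}^i(I^s\F) \hookrightarrow \rm{H}^{i+1}(I^s\F')$; commutativity forces $\delta(\rm{F}^s\rm{H}^i(\F)) \subset \rm{F}^s\rm{H}^{i+1}(\F')$. The induction hypothesis applied to the FP-approximated $\F'$ bounds this inside $I^{k'}\rm{H}^{i+1}(\F')$ once $s$ is sufficiently large; meanwhile Theorem~\ref{thm:FP-approx-proper} ensures that both $\rm{H}^i(\F)$ and $\rm{H}^{i+1}(\F')$ are FP-approximated $R$-modules, so Lemma~\ref{lemma:fp-approx-subtopolgy} applied to the submodule $\delta(\rm{H}^i(\F)) \subset \rm{H}^{i+1}(\F')$ bounds $I^{k'}\rm{H}^{i+1}(\F') \cap \delta(\rm{H}^i(\F))$ inside $I^k\delta(\rm{H}^i(\F))$ for $k'$ chosen appropriately (given $k$). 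Chaining these inclusions gives $\rm{F}^s\rm{H}^i(\F) \subset I^k\rm{H}^i(\F)$ for $s$ large, which is the nontrivial direction (the reverse $I^k \subset \rm{F}^k$ is automatic). I expect the main technical point to be the flat base change identifications for the cohomology of $\O(r)^m$ in the non-noetherian setting, i.e.\ the equality $\rm{Im}(\rm{H}^n(I^s\O(r)^m) \to T) = I^sT$ and the vanishings $\rm{H}^j(I^s\O(r)^m) = 0$ for $j < n$; these work without any noetherian hypothesis precisely because the Čech complex of $\O(r)$ over the standard affine cover is a complex of finite free $R$-modules, so flat base change applies verbatim.
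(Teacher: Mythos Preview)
Your descending induction on the cohomological degree is a genuinely different strategy from the paper's, which instead inducts on the number of generators of $I$: the base case there is $I$ principal (handled by a direct \v{C}ech argument using bounded torsion), and the inductive step passes to $\F/a_r^k\F$ so as to invoke the hypothesis for the smaller ideal $I_0=(a_1,\dots,a_{r-1})$. Your route has the appeal of recycling the very resolution $0\to\F'\to\O(r)^m\to\F\to 0$ used in Theorem~\ref{thm:FP-approx-proper}; the paper's route has the advantage that it never needs the special geometry of $\P^n$ beyond what goes into Theorem~\ref{thm:FP-approx-proper}.

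There is, however, a real gap in your step $0\le i<n$. You invoke a connecting map $\delta_s\colon \rm{H}^i(I^s\F)\hookrightarrow \rm{H}^{i+1}(I^s\F')$, implicitly assuming that $0\to I^s\F'\to I^s\O(r)^m\to I^s\F\to 0$ is exact. It is not: the kernel of $I^s\O(r)^m\twoheadrightarrow I^s\F$ is $K_s\coloneqq \F'\cap I^s\O(r)^m$, and the quotient $K_s/I^s\F'$ is (locally) $\Tor_1^{\O}(\F,\O/I^s\O)$, which is nonzero whenever $\F$ has $I$-torsion. So the inclusion $\delta(\rm{F}^s\rm{H}^i(\F))\subset \rm{F}^s\rm{H}^{i+1}(\F')$ is not justified as written. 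The repair is short but essential: use the correct short exact sequence $0\to K_s\to I^s\O(r)^m\to I^s\F\to 0$, whose connecting map lands in $\rm{H}^{i+1}(K_s)$, and then apply Corollary~\ref{cor:FP-approx-subsheaf-top} to the inclusion $\F'\subset \O(r)^m$ to get, for each $k$, an $s$ with $K_s\subset I^k\F'$. This yields $\delta(\rm{F}^s\rm{H}^i(\F))\subset \rm{F}^k\rm{H}^{i+1}(\F')$, after which your chain (induction hypothesis on $\F'$, then Lemma~\ref{lemma:fp-approx-subtopolgy}) goes through unchanged. Note that the $i=n$ case is unaffected, since there you only use surjectivity of $\rm{H}^n(I^s\O(r)^m)\to\rm{H}^n(I^s\F)$, which follows from $\rm{H}^{n+1}(K_s)=0$ regardless of what $K_s$ is.

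One minor correction: the \v{C}ech terms of $\O(r)$ on the standard cover are free over $R$ but not \emph{finite} free (already $\Gamma(D_+(x_0),\O(r))$ is infinite-rank for $r<0$). Your base-change claims still hold, either because the complex splits over monomials into finite free pieces that are individually contractible or concentrated in one degree, or more directly via the long exact sequence of $0\to I^s\O(r)\to\O(r)\to (R/I^s)\O(r)\to 0$ and the explicit computation over $R/I^s$.
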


The proof follows the idea of the proof of the Formal Function Theorem in rigid geometry. Namely, we give a relatively simple argument in the case $I$ is generated by one element, and then argue by induction on the number of generators. See \cite[Proposition 6.4/8]{B} for an example of a classical argument of this form. However, it would be nice to give a proof of Theorem~\ref{thm:FP-approx-topology-cohomology} as a formal consequence of Theorem~\ref{thm:FP-approx-proper} similar to what happens in \cite[Proposition I.8.5.2]{FujKato}.

\begin{proof}
{\it Step 1. Case of a principal ideal $I$}: Suppose that $I$ is a generated by one element $a$. Choose a finite open affine covering $X=\cup_{i=1}^n U_i$ that we denote by $\cal{U}$. Then we define 
\[
C^\bullet\coloneqq \check{C}^\bullet(\cal{U}, \F)
\]
to be the (alternating) Cech complex of $\F$ with respect to the covering $\cal{U}$.  We note that $I^nC^\bullet =\check{C}^\bullet(\cal{U}, I^n\F)$. So we conclude that
\[
\rm{F}^n\rm{H}^i(X, \F)=\rm{Im}(\rm{H}^i(I^nC^\bullet) \to \rm{H}^i(C^\bullet)).
\]
Since the natural $I$-topology on $\rm{H}^i(X, \F)$ is induced from the subspace topology on $\ker d^i$, it suffices to show subspace topology on $\ker d^i \subset C^i$ coincides with the $I$-adic topology. \cite[Lemma 0.8.2.14]{FujKato} ensures that it suffices to verify that $C^i/\ker d^i$ has bounded $a^\infty$-torsion. Since $C^i/\ker d^i$ is naturally a submodule of $C^{i+1}$, it suffices to justify the claim for $C^{i+1}$. Now we recall that $C^{i+1}=\check{C}^{i+1}(\cal{U}, \F)$, so it suffices to show that $\F(U_{j_0}\cap \dots U_{j_{i+1}})$ has bounded $a^\infty$-torsion for all possible $j_0, \dots, j_{i+1}\in [1, n]$. This follows from affinness of each intersection $U_{j_0}\cap \dots \cap U_{j_{i+1}}$ and Lemma~\ref{lemma:FP-approx-bounded-torsion} since $\F$ is FP-approximated. \smallskip

{\it Step 2. The General Case}: We argue by induction on the number of generators $I=(a_1, \dots, a_r)$ over all such $\F$. The claim for $r=1$ was proven in Step~$1$. So we assume that the claim is known for any $i<r$ and all such $\F$, we show that this implies the claim for $r$. \smallskip

Clearly, $I^n\rm{H}^i(X, \F) \subset \rm{F}^n\rm{H}^i(X, \F)$, so it suffices to show that, for any $n$, there is an $m$ such that 
\[
\rm{F}^m\rm{H}^i(X, \F) \subset I^n\rm{H}^i(X, \F).
\]

Lemma~\ref{lemma:FP-approx-enough} ensures that it suffices to prove the claim under the assumption that $\F$ is a quasi-coherent $\O_X$-module of finite type. In particular, $\F$ is FP-approximated with respect to $I_0=(a_1, \dots, a_{r-1})$ and $a_r$ by Lemma~\ref{lemma:FP-approx-finite-type}(\ref{lemma:FP-approx-finite-type-1}). We also note that both pairs $(R, I_0)$ and $(R, a_r)$ are universally pseudo-adhesive. 

Indeed, \cite[Proposition 0.8.2.16]{FujKato} implies that they satisfy the $(\mathbf{BT})$ property, i.e. any finite $R$-module $M$ has bounded $a_r$-power and $I_0$-power torsion. Clearly, $\Spec R$ is noetherian outside $\rm{V}(a_r)$ and $\rm{V}(I_0)$ as $(a_r), I_0\subset I$. Applying the same argument to $R[T_1, \dots, T_d]$ for every $d$, we get that $R$ is universally $I_0$-adically and $a_r$-adically pseudo-adhesive. Therefore, the induction hypothesis can be applied to both $(R, I_0)$ and $(R, a_r)$.  \smallskip

\smallskip

Now we consider the short exact sequence
\[
0 \to a_r^k \F \to \F \to \F/a_r^k \F \to 0
\]
and define 
\[
\rm{H}^i\coloneqq \rm{Im}\left( \rm{H}^i(X, \F) \to \rm{H}^i(X, \F/a_r^k \F)\right)
\]
with the topology induced from the natural $I$-topology on $\rm{H}^i(X, \F)$. More precisely, it is topology defined by the filtration 
\[
\rm{F}^n\rm{H}^i\coloneqq \rm{Im}\left(\rm{F}^n\rm{H}^i(X, \F) \to \rm{H}^i\right)=\rm{Im}\left(\rm{H}^i(X, I^n\F) \to \rm{H}^i(X, \F/a^k_r\F)\right)
\]
The following two claims finish the proof. 

\begin{claim} It suffices to show that the topology on $\rm{H}^i$ coincides with the $I$-adic topology for any $k\geq 0$
\end{claim}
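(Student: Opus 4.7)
The plan is to establish the stated sufficiency, i.e.\ to derive from the hypothesis that (for every $k \geq 0$) the topology on $\rm{H}^i = \rm{H}^i(X, \F)/K_k$ coincides with the $I$-adic topology, the conclusion that the natural $I$-topology on $\rm{H}^i(X, \F)$ itself coincides with the $I$-adic topology. Since $I^n \rm{H}^i(X, \F) \subset \rm{F}^n \rm{H}^i(X, \F)$ is automatic, I only need to show that for each $n \geq 0$ there is some $m$ with $\rm{F}^m \rm{H}^i(X, \F) \subset I^n \rm{H}^i(X, \F)$.

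Fix $n$. For each $k$, the hypothesis unravels (using $\rm{F}^n \rm{H}^i = (\rm{F}^n \rm{H}^i(X, \F) + K_k)/K_k$ with $K_k = \ker(\rm{H}^i(X, \F) \to \rm{H}^i(X, \F/a_r^k\F))$) to produce $m_k$ such that
\[
\rm{F}^{m_k} \rm{H}^i(X, \F) \subset I^n \rm{H}^i(X, \F) + K_k.
\]
So it would suffice to exhibit a single $k$ for which $K_k \subset I^n \rm{H}^i(X, \F)$, because then the associated $m_k$ finishes the argument. My strategy for producing such a $k$ is to dissect $K_k$ using the two short exact sequences
\[
0 \to \F[a_r^k] \to \F \xrightarrow{a_r^k} a_r^k\F \to 0, \qquad 0 \to a_r^k\F \to \F \to \F/a_r^k\F \to 0,
\]
which together yield $K_k = a_r^k \rm{H}^i(X, \F) + \Lambda_k$, where $\Lambda_k$ is the image in $\rm{H}^i(X, \F)$ of a subquotient of $\rm{H}^{i+1}(X, \F[a_r^k])$. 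Since $\F$ is a finite type quasi-coherent $\O_X$-module over the pseudo-adhesive $R$-scheme $X$, $\F[a_r^\infty] = \F[a_r^c]$ for some $c$; for $k \geq c$ the sheaf $\F[a_r^k]$ equals the fixed sheaf $\F[a_r^c]$, and the ambient module $T \coloneqq \rm{H}^{i+1}(X, \F[a_r^c])$ is FP-approximated by Theorem~\ref{thm:FP-approx-proper} and annihilated by $a_r^c$. For $k \geq n$, the piece $a_r^k \rm{H}^i(X, \F)$ lies in $I^n \rm{H}^i(X, \F)$, so the whole task reduces to handling $\Lambda_k$.

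The main obstacle will be forcing this torsion contribution $\Lambda_k$ into $I^n \rm{H}^i(X, \F)$: unlike the direct $a_r^k$-piece, $\Lambda_k$ does not shrink with $k$ in an obvious way because it comes from a stable module $T$. My plan is to exploit that $a_r^c \Lambda_k \subset a_r^k \rm{H}^i(X, \F)$ (since $a_r^c T = 0$), so any $\gamma$ contributing to $\Lambda_k$ satisfies $a_r^c \gamma \in a_r^k \rm{H}^i(X, \F)$; combined with the fact that $\rm{H}^i(X, \F)$ is FP-approximated and therefore has bounded $a_r$-power torsion (Theorem~\ref{thm:FP-approx-proper} and Lemma~\ref{lemma:FP-approx-bounded-torsion}), this forces $\gamma$ into $a_r^{k-c} \rm{H}^i(X, \F)$ modulo a fixed torsion submodule. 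Finally, that residual torsion submodule can be absorbed into $I^n \rm{H}^i(X, \F)$ by re-applying the hypothesis to a suitably enlarged $n' \geq n$ and choosing $k$ sufficiently large relative to $n'$, closing the reduction.
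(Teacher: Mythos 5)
Your opening reduction is exactly the right one, and it matches the paper: the hypothesis for a given $k$ unravels to $\rm{F}^{m_k}\rm{H}^i(X,\F)\subset I^n\rm{H}^i(X,\F)+K_k$, where $K_k=\rm{Im}\left(\rm{H}^i(X,a_r^k\F)\to\rm{H}^i(X,\F)\right)$ is the kernel of $\rm{H}^i(X,\F)\to\rm{H}^i(X,\F/a_r^k\F)$, so it suffices to exhibit one $k$ with $K_k\subset I^n\rm{H}^i(X,\F)$. The gap is in your route to that containment. Your torsion analysis only yields $K_k\subset a_r^{k-c}\rm{H}^i(X,\F)+W$, where $W\subset \rm{H}^i(X,\F)[a_r^{\infty}]$ is a \emph{fixed} torsion submodule independent of $k$ and $n$, and the proposed last step --- absorbing $W$ into $I^n\rm{H}^i(X,\F)$ by re-applying the hypothesis with a larger $n'$ and a larger $k$ --- does not work: the hypothesis only controls classes modulo $K_k$ (it produces inclusions of the shape $\rm{F}^m\rm{H}^i(X,\F)\subset I^{n'}\rm{H}^i(X,\F)+K_k$), so it says nothing about the torsion classes $w=\gamma-a_r^{k-c}h$, which lie precisely in the part being quotiented out; and $W$ does not shrink as $k$ or $n'$ grow. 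What you would need at this point --- that the filtration $K_k$ is cofinal with the $a_r$-adic filtration on $\rm{H}^i(X,\F)$ --- is exactly the principal-ideal case of the theorem for the pair $(R,(a_r))$, and it is not a formal consequence of bounded $a_r$-torsion of $\rm{H}^i(X,\F)$ and $\rm{H}^{i+1}(X,\F[a_r^c])$ alone: the paper's Step 1 proves it by working at the level of the \v{C}ech complex via \cite[Lemma 0.8.2.14]{FujKato}, not at the level of cohomology modules.

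The intended argument is much shorter and uses an input you never invoke. After the reduction already made in the main proof, $\F$ is of finite type, hence FP-approximated with respect to $(a_r)$, and $(R,(a_r))$ is universally pseudo-adhesive; so Step 1 applies directly and gives, for the fixed $n$, an integer $d$ with
\[
K_d=\rm{Im}\left(\rm{H}^i(X,a_r^d\F)\to\rm{H}^i(X,\F)\right)\subset a_r^n\rm{H}^i(X,\F)\subset I^n\rm{H}^i(X,\F).
\]
Combining this with your first display for $k=d$ closes the claim. So keep your reduction, but replace the torsion bookkeeping by an appeal to Step 1.
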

\begin{proof}
Step~$1$ justifies that there is $d$ such that $\rm{Im}\left(\rm{H}^i\left(X, a_r^d\F\right) \to \rm{H}^i\left(X, \F\right)\right) \subset a_r^n\rm{H}^i\left(X, \F\right)$. Then we use the assumption for $k=d$ to see that there is $m$ such that
\[
\rm{F}^m \rm{H}^i \subset I^n \rm{H}^i.
\]
This implies that 
\[
\rm{F}^m\rm{H}^i(X, \F) \subset I^n\rm{H}^i(X, \F) +  \rm{Im}\left(\rm{H}^i(X, a_r^d\F) \to \rm{H}^i\left(X, \F\right)\right) \subset I^n\rm{H}^i(X, \F) + a_r^n\rm{H}^i(X, \F) \subset I^n\rm{H}^i(X, \F).
\]
So this constructs the desired $m$. 
\end{proof}

\begin{claim} The topology on $\rm{H}^i$ coincides with the $I$-adic topology for any $k\geq 0$.
\end{claim}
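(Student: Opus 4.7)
The plan is to transfer the statement from the ambient cohomology module $\rm{H}^i(X, \F/a_r^k\F)$ down to its submodule $\rm{H}^i$. Since $\F$ is of finite type, so is $\F/a_r^k\F$, which is therefore FP-approximated by Lemma~\ref{lemma:FP-approx-finite-type}(\ref{lemma:FP-approx-finite-type-1}); consequently $\rm{H}^i(X, \F/a_r^k\F)$ is an FP-approximated $R$-module by Theorem~\ref{thm:FP-approx-proper}. My first task will be to show that the natural $I$-topology on $\rm{H}^i(X, \F/a_r^k\F)$ coincides with its $I$-adic topology; the submodule $\rm{H}^i$ will then inherit the $I$-adic topology via Lemma~\ref{lemma:fp-approx-subtopolgy}, and the filtration $\rm{F}^\bullet\rm{H}^i$ defined above will be squeezed between the two.

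For the key step, I would apply the induction hypothesis (i.e.\ Theorem~\ref{thm:FP-approx-topology-cohomology} applied to $I_0 = (a_1, \dots, a_{r-1})$, which is available because $(R, I_0)$ is universally pseudo-adhesive) to the FP-approximated sheaf $\F/a_r^k\F$, concluding that the natural $I_0$-topology on $\rm{H}^i(X, \F/a_r^k\F)$ is its $I_0$-adic topology. The passage from $I_0$ to $I$ rests on the elementary inclusion $I^n \subseteq I_0^{n-k+1} + (a_r^k)$ for all $n \geq k-1$, which, since $a_r^k$ annihilates $\F/a_r^k\F$, promotes to the sheaf inclusion $I^n(\F/a_r^k\F) \subseteq I_0^{n-k+1}(\F/a_r^k\F)$. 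Combined with the trivial $I_0^n \subseteq I^n$, this shows that the natural $I$- and $I_0$-filtrations on $\rm{H}^i(X, \F/a_r^k\F)$ are cofinal; the same comparison applied to the module $\rm{H}^i(X, \F/a_r^k\F)$ itself (which is annihilated by $a_r^k$) shows that its $I$-adic and $I_0$-adic filtrations are also cofinal. Chaining these three identifications yields the desired coincidence of the natural $I$-topology and the $I$-adic topology on $\rm{H}^i(X, \F/a_r^k\F)$.

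For the descent to $\rm{H}^i$, fix $n \geq 0$: Lemma~\ref{lemma:fp-approx-subtopolgy} furnishes $p$ with $I^p\rm{H}^i(X, \F/a_r^k\F) \cap \rm{H}^i \subseteq I^n\rm{H}^i$, and the previous paragraph furnishes $m$ with $\rm{F}^m\rm{H}^i(X, \F/a_r^k\F) \subseteq I^p\rm{H}^i(X, \F/a_r^k\F)$. Since the commutative square relating $I^n\F$, $\F$, $I^n(\F/a_r^k\F)$ and $\F/a_r^k\F$ shows that $\rm{F}^m\rm{H}^i \subseteq \rm{F}^m\rm{H}^i(X, \F/a_r^k\F) \cap \rm{H}^i$, these inclusions chain to $\rm{F}^m\rm{H}^i \subseteq I^n\rm{H}^i$. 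The opposite inclusion $I^n\rm{H}^i \subseteq \rm{F}^n\rm{H}^i$ is automatic, since multiplication by any element of $I^n$ on $\F$ factors through $I^n\F \hookrightarrow \F$. The main delicate point I anticipate is keeping straight the four filtrations at play on $\rm{H}^i(X, \F/a_r^k\F)$ (natural $I$ and $I_0$, adic $I$ and $I_0$) and verifying that the filtration on the submodule $\rm{H}^i$ is correctly compared to the restricted one coming from the ambient module; once this bookkeeping is carefully set up, the argument is a straightforward chase.
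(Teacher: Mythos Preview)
Your proposal is correct and follows essentially the same approach as the paper: both arguments use the induction hypothesis for $I_0$ applied to $\F/a_r^k\F$, the cofinality of the $I$- and $I_0$-filtrations on $a_r^k$-torsion objects, Theorem~\ref{thm:FP-approx-proper} to get FP-approximation of $\rm{H}^i(X,\F/a_r^k\F)$, and Lemma~\ref{lemma:fp-approx-subtopolgy} to pass to the submodule $\rm{H}^i$. The only cosmetic difference is ordering: you first prove that the natural $I$-topology on the ambient module $\rm{H}^i(X,\F/a_r^k\F)$ is $I$-adic and then restrict, whereas the paper switches to $I_0$ on $\rm{H}^i$ at the outset and compares $\rm{F}^m\rm{H}^i$ directly with $\rm{F}^m_{I_0}\rm{H}^i(X,\F/a_r^k\F)$; the ingredients and logic are the same.
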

\begin{proof}
Clearly, $\rm{F}^n\rm{H}^i \subset I^n\rm{H}^i$. Thus, we only need to show that, for any $n$, there is $m$ such that $\rm{F}^m\rm{H}^i \subset I^n\rm{H}^i$. Now we note that the $I_0$-adic topology on $\rm{H}^i$ coincides with the $I$-adic topology on $\rm{H}^i$. Therefore, it suffices to show that, for any $n$, there is $m$ such that 
\[
\rm{F}^m\rm{H}^i \subset I_0^n\rm{H}^i. 
\]
Now Theorem~\ref{thm:FP-approx-proper} $\rm{H}^i(X, \F/a^k_r\F)$ is an FP-approximated module for the pair $(R, I_0)$. Therefore, 
$\rm{H}^i$ is also FP-approximated as a submodule of an FP-approximated module $\rm{H}^i(X, \F/a^k_r\F)$. Now Lemma~\ref{lemma:fp-approx-subtopolgy} says that the subspace topology on $\rm{H}^i$ coincides with the $I_0$-adic topology. Thus, it suffices to show that, for any $n$, there is $m$ such that 
\[
\rm{F}^m\rm{H}^i \subset I_0^n\rm{H}^i\left(X, \F/a_r^k\F\right). \]
However, there is an evident inclusion
\[
\rm{F}^m\rm{H}^i \subset \rm{F}_{I_0}^m\rm{H}^i\left(X, \F/a_r^k\right).
\]
Now we invoke the induction hypotheses to say that the natural $I_0$-adic topology on $\rm{H}^i(X, \F/a_r^k\F)$ coincides with the $I_0$-adic topology in $\rm{H}^i(X, \F/a_r^k\F)$. This, in turn, implies that there is $m$ such that
\[
\rm{F}^m\rm{H}^i \subset \rm{F}_{I_0}^m\rm{H}^i\left(X, \F/a_r^k\F\right) \subset I_0^n \rm{H}^i\left(X, \F/a_r^k\F\right). 
\]
\end{proof}
\end{proof}

\bibliography{biblio}

\begin{thebibliography}{EGA III$_1$}
\expandafter\ifx\csname url\endcsname\relax
  \def\url#1{\texttt{#1}}\fi
\expandafter\ifx\csname doi\endcsname\relax
  \def\doi#1{\burlalt{doi:#1}{http://dx.doi.org/#1}}\fi
\expandafter\ifx\csname urlprefix\endcsname\relax\def\urlprefix{URL }\fi
\expandafter\ifx\csname href\endcsname\relax
  \def\href#1#2{#2}\fi
\expandafter\ifx\csname burlalt\endcsname\relax
  \def\burlalt#1#2{\href{#2}{#1}}\fi

\bibitem[Bos14]{B}
S.~Bosch.
\newblock {\em Lectures on formal and rigid geometry}, volume 2105 of {\em
  Lecture Notes in Mathematics}.
\newblock Springer, Cham, 2014.

\bibitem[Bou98]{Bou}
N.~Bourbaki.
\newblock {\em Commutative algebra. {C}hapters 1--7}.
\newblock Elements of Mathematics (Berlin). Springer-Verlag, Berlin, 1998.
\newblock Translated from the French, Reprint of the 1989 English translation.

\bibitem[BV18]{Buzzard}
K.~Buzzard and A.~Verberkmoes.
\newblock Stably uniform affinoids are sheafy.
\newblock {\em J. Reine Angew. Math.}, 740:25--39, 2018.

\bibitem[EGA III$_1$]{EGA3}
A.~Grothendieck.
\newblock \'{E}l\'{e}ments de g\'{e}om\'{e}trie alg\'{e}brique. {III}.
  \'{E}tude cohomologique des faisceaux coh\'{e}rents. {I}.
\newblock {\em Inst. Hautes \'{E}tudes Sci. Publ. Math.}, (11):167, 1961.

\bibitem[FK18]{FujKato}
K.~Fujiwara and F.~Kato.
\newblock {\em Foundations of rigid geometry. {I}}.
\newblock EMS Monographs in Mathematics. European Mathematical Society (EMS),
  Z\"{u}rich, 2018.

\bibitem[HK21]{KedHan}
D.~Hansen and K.~Kedlaya.
\newblock Sheafiness criteria for huber rings.
\newblock \url{https://kskedlaya.org/papers/criteria.pdf}, 2021.

\bibitem[Hub93]{H0}
R.~Huber.
\newblock Continuous valuations.
\newblock {\em Math. Z.}, 212(3):455--477, 1993.

\bibitem[Hub94]{H1}
R.~Huber.
\newblock A generalization of formal schemes and rigid analytic varieties.
\newblock {\em Math. Z.}, 217(4):513--551, 1994.

\bibitem[Ked17]{KedAr}
K.~Kedlaya.
\newblock Sheaves, stacks, and shtukas. lecture notes at arizona winter school.
\newblock \url{http://swc.math.arizona.edu/aws/2017/2017KedlayaNotes.pdf},
  2017.

\bibitem[Sch12]{Sch0}
P.~Scholze.
\newblock Perfectoid spaces.
\newblock {\em Publ. Math. Inst. Hautes \'{E}tudes Sci.}, 116:245--313, 2012.

\bibitem[Sem15]{Seminar}
{The {Learning {Seminar {authors}}}}.
\newblock Stanford learning seminar.
\newblock \url{http://virtualmath1.stanford.edu/~conrad/Perfseminar/},
  2014--2015.

\end{thebibliography}

\end{document}